\newcommand{\arxiv}[1]{\href{http://arxiv.org/abs/#1}{arXiv:#1}}
\newcommand{\myoverline}[1]{\overline{#1}\vphantom{#1}}
\newcommand{\kvec}{\mathfrak{k}}
\newcommand{\rvec}{\mathfrak{r}}
\newcommand{\svec}{\mathfrak{s}}
\newtheorem{thm}{Theorem}[section]
\newtheorem{lem}[thm]{Lemma}
\newtheorem{prop}[thm]{Proposition}
\theoremstyle{definition}
\newtheorem*{notn}{Notation \& Assumption}
\newtheorem{ex}[thm]{Example}
\newtheorem{rem}[thm]{Remark}
\begin{document}

\title[Asymptotic joint spectra]{Asymptotic joint spectra of Cartesian powers of strongly regular graphs and bivariate Charlier--Hermite polynomials}

\keywords{Quantum probability, central limit theorem, spectral distribution, strongly regular graph, orthogonal polynomial, hypergeometric series}
\subjclass[2010]{Primary 46L53; Secondary 60F05, 05E30, 33C45.}

\author{John Vincent S. Morales}
\address{Mathematics and Statistics Department, College of Science, De La Salle University, Manila, Philippines}
\email{john.vincent.morales@dlsu.edu.ph}
\author{Nobuaki Obata}
\address{\href{http://www.math.is.tohoku.ac.jp/}{Research Center for Pure and Applied Mathematics}, Graduate School of Information Sciences, Tohoku University, Sendai, Japan}
\email{obata@math.is.tohoku.ac.jp}
\urladdr{http://www.math.is.tohoku.ac.jp/~obata/}
\author{Hajime Tanaka}
\address{\href{http://www.math.is.tohoku.ac.jp/}{Research Center for Pure and Applied Mathematics}, Graduate School of Information Sciences, Tohoku University, Sendai, Japan}
\email{htanaka@tohoku.ac.jp}
\urladdr{http://www.math.is.tohoku.ac.jp/~htanaka/}

\begin{abstract}
Generalizing previous work of Hora (1998) on the asymptotic spectral analysis for the Hamming graph $H(n,q)$ which is the $n^{\mathrm{th}}$ Cartesian power $K_q^{\square n}$ of the complete graph $K_q$ on $q$ vertices, we describe the possible limits of the joint spectral distribution of the pair $(G^{\square n},\myoverline{G}^{\square n})$ of the $n^{\mathrm{th}}$ Cartesian powers of a strongly regular graph $G$ and its complement $\overline{G}$, where we let $n\rightarrow\infty$, and $G$ may vary with $n$.
This result is an analogue of the bivariate central limit theorem, and we obtain in this way the bivariate Poisson distributions and the standard bivariate Gaussian distribution, together with the product measures of univariate Poisson and Gaussian distributions.
We also report a family of bivariate hypergeometric orthogonal polynomials with respect to the last distributions, which we call the \emph{bivariate Charlier--Hermite polynomials}, and prove basic formulas for them.
This family of orthogonal polynomials seems previously unnoticed, possibly because of its peculiarity.
\end{abstract}

\maketitle

\section{Introduction}

Let $G=(V,E)$ be a finite simple graph with vertex set $V$ and edge set $E$.
(Formal definitions about graphs will be given in Section \ref{sec: main result}.)
The \emph{adjacency matrix} $A$ of $G$ is the $0$-$1$ matrix indexed by $V$, where $A_{x,y}=1$ if and only if $x$ and $y$ are adjacent.
By an \emph{eigenvalue} of $G$ we mean an eigenvalue of $A$.
Likewise, we speak of the \emph{spectrum} of $G$.

Spectra of graphs have been receiving attention from the point of view of quantum probability theory.
Recall that an \emph{algebraic probability space} is a pair $(\mathcal{A},\varphi)$, where $\mathcal{A}$ is a $*$-algebra over $\mathbb{C}$ and $\varphi:\mathcal{A}\rightarrow\mathbb{C}$ is a \emph{state}, i.e., a linear map such that $\varphi(1)=1$ and that $\varphi(a^*a)\geqslant 0$ for every $a\in\mathcal{A}$.
The elements of $\mathcal{A}$ are referred to as (\emph{algebraic}) \emph{random variables}.
We call $a\in\mathcal{A}$ \emph{real} if $a^*=a$.
For a real random variable $a\in\mathcal{A}$, we are interested in finding, and discussing the uniqueness of, a probability measure $\nu$ on $\mathbb{R}$ such that
\begin{equation}\label{distribution}
	\varphi(a^j)=\int_{-\infty}^{+\infty}\!\! x^j\,\nu(dx) \quad (j=0,1,2,\dots).
\end{equation}

Associated with the graph $G$ above is the \emph{adjacency algebra} $\mathbb{C}[A]$, i.e., the commutative subalgebra of the full matrix algebra generated by $A$.
For the $*$-algebra $\mathbb{C}[A]$, it is natural to consider the \emph{tracial state} $\varphi_{\mathrm{tr}}$ defined by\footnote{Another important example is the \emph{vacuum state} $\varphi_x(X)=X_{x,x}$ $(X\in\mathbb{C}[A])$ at a fixed origin $x\in V$. We note that the matrix $*$-algebras we will discuss in this paper all have the property that every element has constant diagonal entries, so that the two states $\varphi_{\mathrm{tr}}$ and $\varphi_x$ turn out to be equal on them.}
\begin{equation*}
	\varphi_{\operatorname{tr}}(X)=\frac{1}{|V|}\operatorname{tr}(X) \quad (X\in\mathbb{C}[A]).
\end{equation*}
Suppose for simplicity that $G$ is $k$-regular, so that $A$ has mean $\varphi_{\operatorname{tr}}(A)=0$ and variance $\varphi_{\operatorname{tr}}(A^2)=k$.
Let $k=\theta_0>\theta_1>\dots>\theta_d$ be the distinct eigenvalues of $G$, and let $m_j$ be the multiplicity of $\theta_j$.
Then the unique probability measure $\nu=\nu_G$ in \eqref{distribution} for $a=A/\sqrt{k}$ is the (normalized) \emph{spectral distribution} of $G$ given by
\begin{equation*}
	\nu_G\!\left(\frac{\theta_j}{\sqrt{k}}\right)=\frac{m_j}{|V|} \quad (j=0,1,\dots,d).
\end{equation*}

Our focus is on the limit of $\nu_G$ when $G$ ``grows'', as an analogue of the classical central limit theorem.
Hora \cite{Hora1998IDAQPRT} described various limit distributions for several growing families of Cayley graphs and \emph{distance-regular graphs} (cf.~\cite{DKT2016EJC}).
For example, for the Hamming graphs $H(n,q)$ which are one of the most important families of distance-regular graphs, he obtained a Poisson distribution when $q/n\rightarrow \tau$ $(n\rightarrow\infty)$ where $0<\tau<\infty$, and the standard Gaussian distribution when $q/n\rightarrow 0$ $(n\rightarrow\infty)$.
Hora worked with the spectra directly in \cite{Hora1998IDAQPRT}, but then Hora, Obata, and others revisited, simplified, and generalized these results based on the idea of decomposing the random variable $A$ into the sum of certain three non-commuting components $A^+,A^{\circ},A^-$ in a larger $*$-algebra,\footnote{We may remark that $A^+,A^{\circ}$, and $A^-$ belong to the \emph{Terwilliger algebra} \cite{Terwilliger1992JAC} of $G$. See \cite[Section 16.6]{DKT2016EJC}.} called the \emph{quantum decomposition} of $A$.
Besides the Poisson and Gaussian distributions, many important univariate distributions arise in this way, such as the exponential, geometric, gamma, and the two-sided Rayleigh distributions.
See, e.g., \cite{HHO2003JMP,HOT2001P,HO2007B,HO2008TAMS} for more details.

The purpose of the present paper is to give a concrete \emph{bivariate} example of this sort, as an attempt towards a multivariate extension of the theory.
Consider again a general algebraic probability space $(\mathcal{A},\varphi)$.
We now pick two \emph{commuting} real random variables $a,b\in\mathcal{A}$, and discuss a probability measure $\nu$ on $\mathbb{R}^2$ such that
\begin{equation}\label{joint distribution}
	\varphi(a^jb^h)=\int_{\mathbb{R}^2}x^jy^h\,\nu(dxdy) \quad (j,h=0,1,2,\dots).
\end{equation}
In our context, we take another (say, $\ell$-regular) graph $H=(V,F)$ having the same vertex set $V$ as $G$, and assume that the adjacency matrix $B$ of $H$ commutes with $A$.
This occurs for example when $H=\overline{G}$, the complement of $G$.
(Recall that we are assuming that $G$ is $k$-regular.)
We view $A$ and $B$ as real random variables in the algebraic probability space $(\mathbb{C}[A,B],\varphi_{\operatorname{tr}})$.
Note that the covariance $\varphi_{\operatorname{tr}}(AB)$ for $A$ and $B$ equals $0$ if and only if $G$ and $H$ have no edge in common, i.e., $E\cap F=\emptyset$.
Let $\ell=\eta_0>\eta_1>\dots>\eta_e$ be the distinct eigenvalues of $H$, and let $m_{j,h}$ be the dimension of the common eigenspace of $(A,B)$ with respective eigenvalues $(\theta_j,\eta_h)$.
The probability measure $\nu=\nu_{G,H}$ in \eqref{joint distribution} for $a=A/\sqrt{k}$ and $b=B/\sqrt{\ell}$ is then the (normalized) \emph{joint spectral distribution} of $G$ and $H$ given by
\begin{equation*}
	\nu_{G,H}\!\left(\frac{\theta_j}{\sqrt{k}},\frac{\eta_h}{\sqrt{\ell}}\right)=\frac{m_{j,h}}{|V|} \quad (j=0,1,\dots,d,\ h=0,1,\dots,e).
\end{equation*}
We are again interested in the limit of $\nu_{G,H}$ when $G$ and $H$ both grow, as an analogue of the bivariate central limit theorem.

Our main result (Theorem \ref{main theorem}) is indeed a bivariate version of the result of Hora \cite{Hora1998IDAQPRT} for the Hamming graphs mentioned above.
The Hamming graph $H(n,q)$ is defined as the $n^{\mathrm{th}}$ \emph{Cartesian power} $K_q^{\square n}$ of the complete graph $K_q$ on $q$ vertices.
We will instead consider the pair $(G^{\square n},\myoverline{G}^{\square n})$ of the $n^{\mathrm{th}}$ Cartesian powers of a \emph{strongly regular graph} $G$ and its complement $\overline{G}$, and obtain as limits the bivariate Poisson distributions and the standard bivariate Gaussian distribution, together with the product measures of univariate Poisson and Gaussian distributions.
The method of quantum decomposition is yet to be developed for the multivariate case, and hence we will deal with the spectra of these graphs directly, as was done by Hora in \cite{Hora1998IDAQPRT}, though the discussions here become much more involved.
We note that the complete graphs are the connected regular graphs with precisely two distinct eigenvalues, whereas the connected strongly regular graphs are those with precisely three distinct eigenvalues.
This comparison can be made clearer when viewed in the framework of \emph{association schemes}, and our choice of considering the pair $(G^{\square n},\myoverline{G}^{\square n})$ above was in fact guided naturally by the work of Mizukawa and Tanaka \cite{MT2004PAMS} on a construction of multivariate Krawtchouk polynomials from arbitrary association schemes.
See Section \ref{sec: Charlier-Hermite polynomials}.
As a by-product, we report in Section \ref{sec: Charlier-Hermite polynomials} a family of bivariate hypergeometric orthogonal polynomials with respect to the last distributions, which we call the \emph{bivariate Charlier--Hermite polynomials}, and prove basic formulas for them.
This family of orthogonal polynomials seems previously unnoticed, possibly because of its peculiarity.

Section \ref{sec: strongly regular graphs} collects necessary facts about strongly regular graphs.
Section \ref{sec: proof} is devoted to the proof of Theorem \ref{main theorem}.
In Section \ref{sec: examples}, we demonstrate Theorem \ref{main theorem} with some specific families of strongly regular graphs.

\section{Basic definitions and the main result}\label{sec: main result}

Let $G=(V,E)$ be a graph with vertex set $V$ and edge set $E$.
All the graphs we consider in this paper are finite and simple.
Thus, $V$ is a finite set and $E$ is a subset of $\binom{V}{2}$, the set of $2$-element subsets of $V$.
The elements of $V$ are \emph{vertices} of $G$ and the elements of $E$ are \emph{edges} of $G$.
Two vertices $x,y\in V$ are called \emph{adjacent} (and written $x\sim y$) if $\{x,y\}\in E$.
The \emph{degree} (or \emph{valency}) $k(x)$ of a vertex $x\in V$ is the number of vertices adjacent to $x$.
The graph $G$ is called $k$-\emph{regular} if $k(x)=k$ for all $x\in V$.
It is called \emph{connected} if for any two vertices $x$ and $y$, there is a sequence of vertices $x=x_0,x_1,\dots,x_t=y$ such that $x_{j-1}\sim x_j$ for $j=1,2,\dots,t$.
Recall that a \emph{complete graph} $K_v$ is a graph on $|V|=v$ vertices such that $E=\binom{V}{2}$.
As in Introduction, the \emph{adjacency matrix} $A$ of $G$ is the matrix indexed by $V$ such that $A_{x,y}=1$ if $x\sim y$ and $A_{x,y}=0$ otherwise.
The \emph{complement} $\overline{G}$ of $G$ is the graph with the same vertex set $V$ as $G$, where two distinct vertices are adjacent if and only if they are non-adjacent in $G$.
Thus, $\overline{G}$ has adjacency matrix $\overline{A}:=J-A-I$, where $I$ and $J$ denote the identity matrix and the all-ones matrix, respectively.

The \emph{Cartesian product} $G_1\,\square\, G_2$ of two graphs $G_j=(V_j,E_j)$ $(j=1,2)$ is the graph with vertex set $V_1\times V_2$, where $(x_1,x_2)\sim (y_1,y_2)$ if and only if either $x_1\sim y_1$ and $x_2=y_2$, or $x_1=y_1$ and $x_2\sim y_2$; cf.~\cite[Section 1.4.6]{BH2012B}.
For a positive integer $n$, the Cartesian power $G\,\square\,G\,\square\cdots\square\,G$ ($n$ times) will be denoted by $G^{\square n}$.
For example, we already mentioned that $H(n,q)=K_q^{\square n}$.
The adjacency matrix $\bm{A}$ of $G^{\square n}$ is given by
\begin{equation}\label{adjacency matrix of power}
	\bm{A}=\sum_{j=1}^n I\otimes\dots \otimes I\otimes \underset{\stackrel{\frown\vphantom{A}}{j}}{A} \otimes I\otimes \dots \otimes I.
\end{equation}

From now on, suppose that $G$ is $k$-regular and has $|V|=v$ vertices.
We note that $k$ is an eigenvalue of $G$ (i.e., of $A$), and that every eigenvalue $\theta$ of $G$ satisfies $|\theta|\leqslant k$; cf.~\cite[Section 1.3.1]{BH2012B}.
We have $AJ=JA=kJ$, and hence $A\overline{A}=\overline{A}A$.
Observe that $G^{\square n}$ is $nk$-regular, and that $\bm{A}\overline{\bm{A}}=\overline{\bm{A}}\bm{A}$, where $\overline{\bm{A}}$ denotes the adjacency matrix of $\myoverline{G}^{\square n}$.
Since $G^{\square n}$ and $\myoverline{G}^{\square n}$ have no edge in common, the covariance $\varphi_{\operatorname{tr}}(\bm{A}\overline{\bm{A}})=0$.

We call $G$ \emph{strongly regular} with parameters $(v,k,\lambda,\mu)$ if $G$ is not complete or edgeless (i.e., $0<k<v-1$), and if every pair of adjacent (resp.~non-adjacent and distinct) vertices has precisely $\lambda$ (resp.~$\mu$) common adjacent vertices; cf.~\cite[Section 9.1]{BH2012B}.
In matrix terms, this means that
\begin{equation}\label{quadratic equation}
	A^2=kI+\lambda A+\mu \overline{A}.
\end{equation}
It is clear that $G$ is a disconnected strongly regular graph precisely when it is the disjoint union $p K_q$ of $p$ complete graphs $K_q$ for some integers $p,q\geqslant 2$.
It is easy to see that if $G$ is strongly regular as above then $\overline{G}$ is again strongly regular with parameters $(v,\overline{k},\overline{\lambda},\overline{\mu})$, where
\begin{equation}\label{parameters of complement}
	\overline{k}=v-k-1, \quad \overline{\lambda}=v-2k+\mu-2, \quad \overline{\mu}=v-2k+\lambda.
\end{equation}
Thus, strongly regular graphs always exist in pairs.
The complement of $pK_q$ is the \emph{complete multipartite graph} $K_{p\times q}$.

Observe that $G$ is complete if and only if the linear span $\langle I,A\rangle$ equals $\langle I,J\rangle$, which is a two-dimensional $*$-algebra.
Likewise, from \eqref{quadratic equation} it follows that $G$ is strongly regular as above if and only if $\langle I,A,\overline{A}\rangle=\langle I,A,J\rangle$ is a three-dimensional (commutative) $*$-algebra.
Suppose now that this is the case.
Then it follows that there are exactly three (maximal) common eigenspaces for $(A,\overline{A})$, one of which corresponds to the eigenvalues $(k,\overline{k})$ and is spanned by the all-ones vector $\bm{1}$ in $\mathbb{C}^v$.
Let $(r,\overline{s})$ and $(s,\overline {r})$ denote the eigenvalues corresponding to the other two, where we have $\overline{s}=-r-1$ and $\overline{r}=-s-1$.
We will assume that $r>s$, or equivalently, $\overline{r}>\overline{s}$.
We have $s,\overline{s}<0$ since $\operatorname{tr}(A)=\operatorname{tr}(\overline{A})=0$, so that\footnote{In fact, it follows that $r,\overline{r}\geqslant 0$ and $s,\overline{s}\leqslant -1$; cf.~Lemma \ref{half case}.}
\begin{equation}\label{range of eigenvalues}
	-1<r\leqslant k, \quad -k\leqslant s<0, \quad -1<\overline{r}\leqslant \overline{k}, \quad -\overline{k}\leqslant \overline{s}<0.
\end{equation}
We call $r$ and $s$ (resp.~$\overline{r}$ and $\overline{s}$) the \emph{restricted}\footnote{More generally, an eigenvalue of a (not necessarily regular) graph is called \emph{restricted} if it has an eigenvector which is not a scalar multiple of $\bm{1}$.} eigenvalues of the strongly regular graph $G$ (resp.~$\overline{G}$).

\begin{notn}
We consider an infinite family of pairs of Cartesian powers of graphs $(G^{\square n},\myoverline{G}^{\square n})$, where $n$ ranges over an infinite set of positive integers, and $G$ is strongly regular and may vary with $n$.
To simplify notation, we think of $G,v,k,\overline{k},r,s$, etc., as functions in $n$.
We will assume that
\begin{equation*}
	\frac{k}{n}\rightarrow \kappa, \quad \frac{\overline{k}}{n}\rightarrow \overline{\kappa}, \quad \frac{r}{n}\rightarrow \rho, \quad \frac{s}{n}\rightarrow \sigma
\end{equation*}
as $n\rightarrow\infty$, where $\kappa,\overline{\kappa},\rho$, and $\sigma$ are finite.
We note that
\begin{equation*}
	\frac{v}{n}\rightarrow \omega:=\kappa+\overline{\kappa}.
\end{equation*}
\end{notn}

The following is our main result which describes the possible limits of the joint spectral distribution $\nu_{G^{\square n},\,\myoverline{G}^{\square n}}$.

\begin{thm}\label{main theorem}
With the above notation and assumption, we have $\rho=0$ or $\sigma=0$, and 
one of the following holds:
\begin{enumerate}
\item $\kappa>0$, $\overline{\kappa}=-\sigma>0$, $\rho=0$, and $\nu_{G^{\square n},\,\myoverline{G}^{\square n}}$ converges weakly to an affine transformation $\nu$ of a bivariate Poisson distribution given by
\begin{equation*}
	\nu\!\left(\frac{\kappa j-\overline{\kappa} h}{\sqrt{\kappa}},\frac{\overline{\kappa} j+\overline{\kappa} h-1}{\sqrt{\overline{\kappa}}}\right)\!=e^{-1/\overline{\kappa}}\!
	\left(\frac{1}{\omega}\right)^{\!\!j}\!\!\left( \vphantom{\bigg(} \frac{\kappa}{\omega\overline{\kappa}}\right)^{\!\!h}\!\frac{1}{j!h!}
\end{equation*}
for $j,h=0,1,2,\dots$.
In this case, $G$ is a complete multipartite graph for all but finitely many values of $n$.
\item $\kappa=\rho>0$, $\overline{\kappa}>0$, $\sigma=0$, and $\nu_{G^{\square n},\,\myoverline{G}^{\square n}}$ converges weakly to an affine transformation $\nu$ of a bivariate Poisson distribution given by
\begin{equation*}
	\nu\!\left(\frac{\kappa j+\kappa h-1}{\sqrt{\kappa}},\frac{\overline{\kappa}j-\kappa h}{\sqrt{\overline{\kappa}}}\right)\!=e^{-1/\kappa}\!
	\left(\frac{1}{\omega}\right)^{\!\!j}\!\!\left(\frac{\overline{\kappa}}{\omega\kappa}\right)^{\!\!h}\!\frac{1}{j!h!}
\end{equation*}
for $j,h=0,1,2,\dots$.
In this case, $G$ is a disjoint union of complete graphs for all but finitely many values of $n$.
\item $\kappa>0$ or $\overline{\kappa}>0$, and $\rho=\sigma=0$, and $\nu_{G^{\square n},\,\myoverline{G}^{\square n}}$ converges weakly to an affine transformation $\nu$ of the product measure of a Poisson distribution and a Gaussian distribution given by
\begin{equation*}
	\int_{\mathbb{R}^2}\!\!\gamma(x,y)\hspace{.01in}\nu(dxdy)=\!\sqrt{\!\frac{\omega}{2\pi}}\,e^{-1/\omega}\sum_{h=0}^{\infty}\!\left(\frac{1}{\omega}\right)^{\!\!h}\!\frac{1}{h!}\! \int_{-\infty}^{+\infty}\!\!\!\! \gamma(\bm{z}_{h,t})\hspace{.01in}e^{-\omega t^2/2}\hspace{.01in}dt
\end{equation*}
for every Borel function $\gamma:\mathbb{R}^2\rightarrow\mathbb{R}$, where
\begin{equation*}
	\bm{z}_{h,t}=\left(\!\sqrt{\kappa}\,h+\!\sqrt{\overline{\kappa}}\,t-\frac{\sqrt{\kappa}}{\omega},\sqrt{\overline{\kappa}}\,h-\!\sqrt{\kappa}\,t-\frac{\sqrt{\overline{\kappa}}}{\omega}\right).
\end{equation*}
\item $\kappa=\overline{\kappa}=\rho=\sigma=0$, and $\nu_{G^{\square n},\,\myoverline{G}^{\square n}}$ converges weakly to the standard bivariate Gaussian distribution.
\end{enumerate}
\end{thm}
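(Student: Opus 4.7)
The plan is first to realize $\nu_{G^{\square n},\myoverline{G}^{\square n}}$ as the push-forward of a trinomial law. The $3$-dimensional algebra $\langle I,A,\overline{A}\rangle$ decomposes $\mathbb{C}^V$ into three common eigenspaces of $(A,\overline{A})$: the line $V_0=\langle\bm{1}\rangle$ with eigenvalue pair $(k,\overline{k})$; the $f$-dimensional $r$-eigenspace $V_1$ with pair $(r,\overline{s})$; and the $g$-dimensional $s$-eigenspace $V_2$ with pair $(s,\overline{r})$. By \eqref{adjacency matrix of power}, the common eigenspaces of $(\bm{A},\overline{\bm{A}})$ on $(\mathbb{C}^V)^{\otimes n}$ are the tensor products $V_{i_1}\otimes\cdots\otimes V_{i_n}$; grouping these by the type counts $(n_0,n_1,n_2)$ shows that $\nu_{G^{\square n},\myoverline{G}^{\square n}}$ is the push-forward of the trinomial law $\operatorname{Mult}(n;1/v,f/v,g/v)$ under
\begin{equation*}
(n_0,n_1,n_2)\longmapsto\biggl(\frac{n_0k+n_1r+n_2s}{\sqrt{nk}},\ \frac{n_0\overline{k}+n_1\overline{s}+n_2\overline{r}}{\sqrt{n\overline{k}}}\biggr).
\end{equation*}
Equivalently, $\nu_{G^{\square n},\myoverline{G}^{\square n}}$ is the law of $(X_n,Y_n):=(\sum_i\xi_i/\sqrt{nk},\sum_i\eta_i/\sqrt{n\overline{k}})$ for i.i.d.\ pairs $(\xi_i,\eta_i)$ taking the values $(k,\overline{k})$, $(r,\overline{s})$, $(s,\overline{r})$ with cell probabilities $p_0=1/v$, $p_1=f/v$, $p_2=g/v$.

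\textbf{The constraint $\rho\sigma=0$ and cell-probability asymptotics.} Evaluating \eqref{quadratic equation} on a restricted eigenvector of $A$ (on which $\overline{A}$ acts as $-r-1$ or $-s-1$) shows that $r,s$ are the roots of $x^2-(\lambda-\mu)x-(k-\mu)=0$; in particular $rs=\mu-k$. Since $0\le\mu\le k=O(n)$, $|rs|=O(n)$, so $\rho\sigma=\lim_{n\to\infty}rs/n^2=0$, and together with $\rho\ge 0\ge\sigma$ this proves the first assertion. The same bound forces the ``small'' restricted eigenvalue to be bounded: $r=O(1)$ in case~(i) (where $|s|\sim\overline{\kappa}n$), and $|s|=O(1)$ in case~(ii) (where $r\sim\kappa n$); in case~(ii) one even gets $|s|=1$ eventually, since $k/r\to 1$ forces $|s|\le k/r<2$ for large $n$. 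Using $f=((v-1)s+k)/(s-r)$ and $g=((v-1)r+k)/(r-s)$ one then extracts the cell-probability asymptotics: $np_0=n/v\to 1/\omega$ when $\omega>0$; $p_1\to 1$ and $np_2\to\kappa/(\omega\overline{\kappa})$ in case~(i); $p_2\to 1$ and $np_1\to\overline{\kappa}/(\omega\kappa)$ in case~(ii); and $np_j\to\infty$ for $j=0,1,2$ in case~(iv). A short moment computation using $\operatorname{tr}(A)=\operatorname{tr}(A\overline{A})=0$ and $\operatorname{tr}(A^2)=vk$ yields $E[\xi_1]=E[\eta_1]=0$, $\operatorname{Var}(\xi_1)=k$, $\operatorname{Var}(\eta_1)=\overline{k}$, $\operatorname{Cov}(\xi_1,\eta_1)=0$, so $(X_n,Y_n)$ has mean $(0,0)$, variances $(1,1)$, and covariance $0$ for every $n$.

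\textbf{Passage to the limit, structural conclusions, and main obstacle.} For each case I would compute the limit of the characteristic function $\varphi_n(t,u)=(E[e^{\imath t\xi_1/\sqrt{nk}+\imath u\eta_1/\sqrt{n\overline{k}}}])^n$. In case~(iv), the atoms have magnitudes $O(\sqrt{k})=o(\sqrt{n})$ and cell probabilities bounded below in the limit, so the multivariate Lindeberg CLT delivers the standard bivariate Gaussian directly from the moment identities above. In cases (i)--(iii), atoms of cell probability $\sim c/n$ contribute Poisson factors via $(1+c(e^{\imath z}-1)/n+o(1/n))^n\to\exp(c(e^{\imath z}-1))$: two such rare atoms ($V_0$ and $V_2$ in~(i); $V_0$ and $V_1$ in~(ii)) combine into a bivariate Poisson, whose atom locations are computed from $\overline{r}=-s-1$, $\overline{s}=-r-1$ and the $O(1)$-bounds above. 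In case~(iii), the single rare atom $V_0$ gives a Poisson factor, while the common atoms $V_1,V_2$ jointly contribute a one-dimensional Gaussian component via CLT (one-dimensional because $n_1+n_2=n-n_0$) whose variance $1/\omega$ and direction in the $(x,y)$-plane are forced by the above covariance identities, producing the stated Poisson--Gaussian mixture. The structural claims in (i)--(ii) emerge from matching atom locations: in case~(i), $(n-j-h)r/\sqrt{nk}\to r/\sqrt{\kappa}$ for bounded $r$, so weak convergence to the stated $\nu$ forces $r\to 0$, hence $r=0$ eventually, which via $rs=\mu-k$ is equivalent to $\mu=k$ and thus to $G=K_{p\times q}$; symmetrically, case~(ii) gives $s=-1$ and $G=pK_q$. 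The hardest step I anticipate is case~(iii): since $r,s$ are only known to be $o(n)$ and may vanish at different sublinear rates (only jointly constrained by $rs=O(n)$), identifying the effective Gaussian direction and verifying the asymptotic independence of the Poisson and Gaussian parts require a careful second-order expansion of $\varphi_n$ across the $V_1$ and $V_2$ atoms together, with the specific affine form of $\bm{z}_{h,t}$ emerging from the covariance structure.
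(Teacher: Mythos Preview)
Your overall strategy---realizing $\nu_{G^{\square n},\myoverline{G}^{\square n}}$ as the push-forward of a trinomial law and passing to the limit via characteristic functions---is exactly the paper's approach, and your derivation of $\rho\sigma=0$ from $rs=\mu-k$ with $0\le\mu\le k$ is in fact cleaner than the paper's (which routes through the multiplicities $f,g$). Your sketches for cases~(iii) and~(iv) are also in line with what the paper does.

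The genuine gap is in the structural conclusions for cases~(i) and~(ii). Your argument there is circular. In case~(ii) you write ``$k/r\to 1$ forces $|s|\le k/r<2$'', but $k/r\to 1$ is equivalent to $\kappa=\rho$, which is part of the \emph{conclusion} of case~(ii), not a hypothesis: at this stage you only know $\rho>0$, $\sigma=0$, and $0<\rho\le\kappa$. Likewise in case~(i) you write ``$|s|\sim\overline{\kappa}n$'', but you only know $|s|\sim(-\sigma)n$; the equality $\overline{\kappa}=-\sigma$ is again something to be proved. Your sentence ``weak convergence to the stated $\nu$ forces $r\to 0$'' is the wrong direction: it would at best show that \emph{if} the limit is the stated $\nu$ then $r\to 0$, whereas the task is to prove the limit \emph{is} that $\nu$. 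You also do not justify that $r$ (resp.\ $s$) is eventually an integer, which is needed to pass from $r\to 0$ to $r=0$ eventually.

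The paper closes this gap with a non-elementary input you are missing: Neumaier's theorem (Proposition~\ref{geometric}), which says that for each fixed integer $m$ there are only finitely many primitive strongly regular graphs with least eigenvalue $-m$ other than Steiner and Latin square graphs. In case~(i) the paper first shows (as you do) that $\overline{s}=-r-1$ is bounded, then applies Neumaier to $\overline{G}$ and rules out conference, Steiner, and Latin square graphs one by one (the last two because for fixed $m$ their $v$ is quadratic in the natural size parameter while $\overline{k}$ is only linear, so $\overline{\kappa}$ and $\omega$ cannot both be finite and positive). This forces $\overline{G}=pK_q$, i.e., $G=K_{p\times q}$, whence $r=0$, $\sigma=-\overline{\kappa}$, $g_\infty=\kappa/\overline{\kappa}$, and the already-computed limiting characteristic function collapses to the stated $\nu$. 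Case~(ii) then follows by the $G\leftrightarrow\overline{G}$ symmetry. Without Neumaier (or an equivalent classification), there is no mechanism in your proposal that produces the equalities $\kappa=\rho$ and $\overline{\kappa}=-\sigma$.
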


\section{Preliminaries on strongly regular graphs}\label{sec: strongly regular graphs}

In this section, we collect necessary facts about strongly regular graphs.
See \cite[Chapter 9]{BH2012B} for more details.
Throughout this section, let $G$ be a (fixed) strongly regular graph with parameters $(v,k,\lambda,\mu)$, and let $\overline{G}$ be the complement of $G$, having parameters $(v,\overline{k},\overline{\lambda},\overline{\mu})$ (cf.~\eqref{parameters of complement}).
Let $A$ (resp.~$\overline{A}$) be the adjacency matrix of $G$ (resp.~$\overline{G}$).
For convenience, we let
\begin{equation*}
	\kvec=(k,\overline{k}), \quad \rvec=(r,\overline{s}), \quad \svec=(s,\overline{r}).
\end{equation*}
Let $\mathbb{U}_{\kvec},\mathbb{U}_{\rvec}$, and $\mathbb{U}_{\svec}$ be the common eigenspaces of $(A,\overline{A})$ associated with $\kvec,\rvec$, and $\svec$, respectively.
Recall that $\mathbb{U}_{\kvec}=\langle\bm{1}\rangle$.
Let
\begin{equation*}
	f=\dim(\mathbb{U}_{\rvec}), \quad g=\dim(\mathbb{U}_{\svec}).
\end{equation*}

There are a number of standard identities involving these scalars.
What we will need in the proof of Theorem \ref{main theorem} are the following:
\begin{gather}
	v=1+k+\overline{k}=1+f+g, \label{sum to v} \\
	0=1+r+\overline{s}=1+s+\overline{r}, \label{row sums of P} \\
	0=k+rf+sg=\overline{k}+\overline{s}f+\overline{r}g, \label{linear traces} \\
	k^2+r^2f+s^2g=kv, \label{quadratic trace 1} \\
	k\overline{k}+r\overline{s}f+s\overline{r}g=0, \label{quadratic trace 2} \\
	\myoverline{k}^2+\myoverline{s}^2f+\myoverline{r}^2g=\overline{k}v, \label{quadratic trace 3} \\
	f=\frac{(v-1)s+k}{s-r},  \quad g=\frac{(v-1)r+k}{r-s}, \label{f and g} \\
	fg=\frac{k\overline{k}v}{(r-s)^2}. \label{fg}
\end{gather}
The proofs of \eqref{sum to v}--\eqref{f and g} are straightforward:
\eqref{sum to v} is clear;
\eqref{row sums of P} is already mentioned and is immediate from $I+A+\overline{A}=J$;
\eqref{linear traces}--\eqref{quadratic trace 3} are the values of $\operatorname{tr}(A)$, $\operatorname{tr}(\overline{A})$, $\operatorname{tr}(A^2)$, $\operatorname{tr}(A\overline{A})$, and $\operatorname{tr}(\smash{\overline{A}}\vphantom{A}^2)$;
\eqref{f and g} follows from \eqref{sum to v} and \eqref{linear traces}.
To show \eqref{fg}, first restrict \eqref{quadratic equation} to $\mathbb{U}_{\rvec}$ and $\mathbb{U}_{\svec}$ and use \eqref{row sums of P} to find that $r$ and $s$ are the solutions of the quadratic equation
\begin{equation*}
	\xi^2-(\lambda-\mu)\xi+\mu-k=0
\end{equation*}
in indeterminate $\xi$, so that we have
\begin{equation}\label{lamda,mu in terms of k,r,s}
		r+s=\lambda-\mu, \quad rs=\mu-k.
\end{equation}
Next, count the triples of distinct vertices $x,y,z$ such that $x\sim y\sim z\not\sim x$ (in $G$) in two ways to get
\begin{equation}\label{how parameters are related}
	k(k-1-\lambda)=\overline{k}\mu.
\end{equation}
Then use \eqref{f and g} together with \eqref{sum to v}, \eqref{lamda,mu in terms of k,r,s}, and \eqref{how parameters are related}.

\begin{lem}\label{half case}
If $r$ and $s$ are non-integral then $f=g$ and we have
\begin{equation}\label{half case parameters}
	v=4\ell+1, \quad k=2\ell, \quad \lambda=\ell-1, \quad \mu=\ell
\end{equation}
for some positive integer $\ell$.
Moreover, in this case we have
\begin{equation}\label{half case eigenvalues}
	r=\frac{-1+\sqrt{1+4\ell}}{2}, \quad s=\frac{-1-\sqrt{1+4\ell}}{2}.
\end{equation}
\end{lem}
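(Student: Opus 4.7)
The plan is to exploit the irrationality of $r$ and $s$ to force the multiplicities $f,g$ to coincide, then to bootstrap this up to the full list of parameters. By \eqref{lamda,mu in terms of k,r,s}, the numbers $r$ and $s$ are the roots of the monic integer polynomial $\xi^2-(\lambda-\mu)\xi+\mu-k$. If they are not integers, then they must be irrational (being algebraic integers of degree $\leqslant 2$), so $r+s=\lambda-\mu$ and $rs=\mu-k$ are integers while $r-s$ is irrational.

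Next I would use the explicit formulas \eqref{f and g} to compute
\begin{equation*}
	f-g=\frac{(v-1)s+k}{s-r}-\frac{(v-1)r+k}{r-s}=\frac{(v-1)(r+s)+2k}{s-r}.
\end{equation*}
The numerator is an integer and the denominator is irrational, so integrality of $f-g$ forces the numerator to vanish. This gives $f=g$ and the identity $(v-1)(\lambda-\mu)=-2k$. In particular $\lambda-\mu$ is a negative integer, so $\lambda-\mu\leqslant -1$ and hence $2k\geqslant v-1$. Applying the same reasoning to the complement $\overline{G}$, whose restricted eigenvalues $\overline{r}=-s-1$ and $\overline{s}=-r-1$ are again non-integral, yields $2\overline{k}\geqslant v-1$. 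Combined with $k+\overline{k}=v-1$ from \eqref{sum to v}, this forces $k=\overline{k}=(v-1)/2$, so $v$ is odd.

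Substituting $v-1=2k$ back into $(v-1)(\lambda-\mu)=-2k$ gives $\lambda-\mu=-1$. Now I would plug this into the counting identity \eqref{how parameters are related}, namely $k(k-1-\lambda)=\overline{k}\mu=k\mu$, to conclude $k-1-\lambda=\mu$, whence $k=\lambda+\mu+1=2\mu$. Setting $\ell:=\mu$ (a positive integer since $G$ is not edgeless), the parameters become $(v,k,\lambda,\mu)=(4\ell+1,2\ell,\ell-1,\ell)$, which is \eqref{half case parameters}. Finally, with these values the quadratic $\xi^2-(\lambda-\mu)\xi+\mu-k=\xi^2+\xi-\ell$ has roots $(-1\pm\sqrt{1+4\ell})/2$, giving \eqref{half case eigenvalues}.

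The only delicate step is the opening irrationality-plus-integrality argument that kills $f-g$; once $f=g$ is in hand, everything else is a short symmetric-parameter manipulation using the identities already collected in the section, with the complement playing a crucial role in upgrading the one-sided bound $k\geqslant(v-1)/2$ to the equality $k=\overline{k}$.
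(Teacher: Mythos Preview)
Your argument is correct and follows essentially the same route as the paper: first obtain $f=g$, then derive $(v-1)(\mu-\lambda)=2k$, and finally solve for the parameters using \eqref{how parameters are related}. The only notable difference is in how you pass from $(v-1)(\mu-\lambda)=2k$ to $\mu-\lambda=1$: the paper observes directly that since $0<k<v-1$ and $\mu-\lambda$ is a positive integer, one is forced to have $\mu-\lambda=1$ and $v-1=2k$; your detour through the complement (to upgrade $2k\geqslant v-1$ to $k=\overline{k}$) reaches the same conclusion but is unnecessary. Likewise, the paper's one-line justification ``$f=g$ since $r$ and $s$ are algebraic conjugates'' (the characteristic polynomial of $A$ has integer coefficients, so Galois-conjugate eigenvalues have equal multiplicities) is equivalent to your explicit computation of $f-g$.
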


\begin{proof}
See \cite[p.\,118]{BH2012B}.
We have $f=g$ since $r$ and $s$ are algebraic conjugates.
Using \eqref{f and g} and \eqref{lamda,mu in terms of k,r,s}, we then have $(v-1)(\mu-\lambda)=2k$.
Since $k<v-1$, this is possible only when $\mu-\lambda=1$ and $v-1=2k$.
In particular, we have $\overline{k}=k$ by \eqref{sum to v}, and hence it follows from \eqref{how parameters are related} that $k=2\mu$, as desired.
\end{proof}

Strongly regular graphs with parameters of the form \eqref{half case parameters} are called \emph{conference graphs}.

We say that $G$ is \emph{imprimitive} if either $G$ or $\overline{G}$ is disconnected, and \emph{primitive} otherwise.
Thus, $G$ is imprimitive if and only if $G=pK_q$ or $G=K_{p\times q}$ for some integers $p,q\geqslant 2$.

\begin{ex}[imprimitive graphs]
Let $p$ and $q$ be integers such that $p,q\geqslant 2$.
The disjoint union $pK_q$ is strongly regular with parameters $(pq,q-1,q-2,0)$ and restricted eigenvalues $r=q-1$, $s=-1$.
The complete multipartite graph $K_{p\times q}$ is strongly regular with parameters $(pq,(p-1)q,(p-2)q,(p-1)q)$ and restricted eigenvalues $r=0$, $s=-q$.
\end{ex}

We now introduce two more families of strongly regular graphs.
See \cite[Sections 9.1.10--9.1.13]{BH2012B}.
Recall that an \emph{incidence structure} is a triple $(\mathscr{P},\mathscr{B},\mathscr{I})$, where $\mathscr{P}$ and $\mathscr{B}$ are finite sets whose elements are called \emph{points} and \emph{blocks}, respectively, and where $\mathscr{I}\subset\mathscr{P}\times\mathscr{B}$.
If $(p,b)\in\mathscr{I}$ then we say that $p$ and $b$ are \emph{incident}, or $p$ is \emph{contained} in $b$, and so on.
The \emph{block graph} of $(\mathscr{P},\mathscr{B},\mathscr{I})$ is the graph $G=(V,E)$ with $V=\mathscr{B}$ where two distinct blocks are adjacent if and only if they contain a point in common.

\begin{ex}[Steiner graphs]
Let $m$ and $d$ be integers such that $2\leqslant m<d$.
A \emph{Steiner system} $S(2,m,d)$ is an incidence structure $(\mathscr{P},\mathscr{B},\mathscr{I})$ with $|\mathscr{P}|=d$ such that every block contains precisely $m$ points, and that any two distinct points are contained in a unique block.
The block graph of an $S(2,m,d)$ is called a \emph{Steiner graph} and is strongly regular with parameters $(v,k,\lambda,\mu)$ provided that $v>d$, where
\begin{equation*}
	v=\frac{d(d-1)}{m(m-1)}, \quad k=\frac{(d-m)m}{m-1}, \quad \lambda=(m-1)^2+\frac{d-2m+1}{m-1}, \quad \mu=m^2,
\end{equation*}
and with restricted eigenvalues
\begin{equation*}
	r=\frac{d-m^2}{m-1}, \quad s=-m.
\end{equation*}
\end{ex}

\begin{ex}[Latin square graphs]
Let $m$ and $e$ be integers with $m,e\geqslant 2$.
A \emph{transversal design} $\mathit{TD}(m,e)$ is an incidence structure $(\mathscr{P},\mathscr{B},\mathscr{I})$ where the point set is given a partition $\mathscr{P}=\mathscr{P}_1\sqcup\dots\sqcup\mathscr{P}_m$ into $m$ \emph{groups} of the same size $e$ (so $|\mathscr{P}|=me$), such that every block is incident with every group in exactly one point, and that any two points from distinct groups are contained in a unique block.
The block graph of a $\mathit{TD}(m,e)$ is called a \emph{Latin square graph} and is strongly regular with parameters $(v,k,\lambda,\mu)$ provided that $m\leqslant e$, where
\begin{equation*}
	v=e^2, \quad k=m(e-1), \quad \lambda=(m-1)(m-2)+e-2, \quad \mu=m(m-1),
\end{equation*}
and with restricted eigenvalues
\begin{equation*}
	r=e-m, \quad s=-m.
\end{equation*}
\end{ex}

The following fundamental result is due to Neumaier \cite{Neumaier1979AM}.

\begin{prop}[{\cite{Neumaier1979AM}}]\label{geometric}
For any fixed integer $m>0$, there are only finitely many primitive strongly regular graphs with least eigenvalue $s=-m$, other than Steiner graphs and Latin square graphs.
\end{prop}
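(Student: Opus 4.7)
The plan is to follow Neumaier's strategy from his 1979 paper. The first ingredient is the Delsarte--Hoffman clique bound: in any strongly regular graph $G$ with smallest eigenvalue $s=-m$, every clique $C$ satisfies $|C|\leqslant 1+k/m$, and when equality holds $C$ is called a \emph{Delsarte clique}. Eigenvalue interlacing applied to the quotient matrix of the vertex partition $\{C,\,V\setminus C\}$ shows that every vertex outside a Delsarte clique has a constant (rational, hence integral) number of neighbours in it, equal to $\mu/m$; in particular the mere existence of Delsarte cliques already constrains the parameter set.

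The next ingredient is Neumaier's \emph{claw bound}: if $G$ is a primitive strongly regular graph with $s=-m$ and if some edge of $G$ fails to lie in a Delsarte clique (equivalently, is contained in a ``bad'' induced configuration on $m+2$ vertices), then $\mu$ is bounded above by an explicit polynomial $\mu_{0}(m)$ depending only on $m$. One proves this by fixing a vertex $x$ and counting induced claws $K_{1,m+1}$ centred at $x$ two ways, using \eqref{quadratic equation} to control the number of common neighbours and the Hoffman bound to control the size of maximal cliques through $x$. Contrapositively, once $\mu>\mu_{0}(m)$, every edge of $G$ lies in some Delsarte clique.

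In this ``geometric'' regime, a short argument shows that two distinct Delsarte cliques meet in at most one vertex (since any edge forces a unique maximum clique containing it when $\mu$ is large enough), so the Delsarte cliques serve as ``blocks'' of an incidence structure $(\mathscr{P},\mathscr{B},\mathscr{I})$ whose block graph is $G$. The axioms inherited from strong regularity force $(\mathscr{P},\mathscr{B},\mathscr{I})$ to be a partial linear space with constant point- and block-degrees, and the three-eigenvalue feasibility of $G$ (together with the value $s=-m$) then forces this structure to be either a Steiner system $S(2,m,d)$, giving a Steiner graph, or a transversal design $TD(m,e)$, giving a Latin square graph.

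It remains to dispose of the case $\mu\leqslant\mu_{0}(m)$. Here $s=-m$ fixed together with \eqref{lamda,mu in terms of k,r,s} yields $\mu-k = -mr$, so $k$ is a polynomially bounded function of $r$ and $m$; combined with \eqref{f and g}, \eqref{fg}, and \eqref{how parameters are related}, this gives explicit polynomial bounds on $r$, $k$, and $v$ purely in terms of $m$, leaving only finitely many parameter tuples $(v,k,\lambda,\mu)$. Since the number of strongly regular graphs with prescribed parameters is trivially finite, the proof concludes. The main obstacle is the claw bound itself: everything else reduces to eigenvalue interlacing or to standard feasibility conditions, but the double-counting argument underlying the claw bound requires a delicate case analysis on the structure of induced $K_{1,m+1}$'s, and this is the genuine technical heart of the proposition.
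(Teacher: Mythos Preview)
The paper does not prove this proposition at all: it is quoted from \cite{Neumaier1979AM} and used as a black box in Section~\ref{sec: proof} to rule out certain growth regimes. There is therefore no ``paper's own proof'' to compare your attempt against.

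That said, your outline is a fair summary of Neumaier's original strategy (Hoffman/Delsarte clique bound, the claw/$\mu$-bound, and the partial-geometry analysis in the large-$\mu$ regime). A couple of points where your sketch is a bit loose: the passage from ``every edge lies in a Delsarte clique'' to the dichotomy Steiner/Latin-square is not as direct as you suggest---Neumaier first recognizes $G$ as the point graph of a partial geometry $\mathrm{pg}(K,R,T)$ with $T=m$, and it is a separate (and nontrivial) step to show that for $\mu$ large relative to $m$ such a geometry must be a $2$-design (Steiner) or a net (transversal design); and in the bounded-$\mu$ case your claim that $r$, $k$, $v$ are polynomially bounded in $m$ needs the additional input that $r$ is bounded (via the claw bound or the absolute bound), not just $rs=\mu-k$. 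These are refinements rather than gaps, and since the present paper only \emph{cites} the result, nothing further is required here.
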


\section{Proof of Theorem \ref{main theorem}}\label{sec: proof}

To prove Theorem \ref{main theorem}, we invoke L\'{e}vy's continuity theorem concerning the pointwise convergence of the characteristic functions; see e.g., \cite[Theorem 8.8.1]{Bogachev2007B}.
Thus, we fix $(\xi_1,\xi_2)\in\mathbb{R}^2$ throughout the proof.

Recall that $G$ depends on $n$ in general.
Let $\bm{A}$ and $\overline{\bm{A}}$ be the adjacency matrices of $G^{\square n}$ and $\myoverline{G}^{\square n}$, respectively.
For convenience, let
\begin{equation}\label{Lambda}
	\Lambda_n=\{(j,h):j,h=0,1,\dots,n,\,j+h\leqslant n\}.
\end{equation}
For every $(j,h)\in\Lambda_n$, consider the subspace
\begin{equation}\label{common eigenspace}
	\bigoplus_{\mathfrak{l}_1,\mathfrak{l}_2,\dots,\mathfrak{l}_n} \!\!\!\! \mathbb{U}_{\mathfrak{l}_1}\!\otimes \mathbb{U}_{\mathfrak{l}_2}\!\otimes\dots\otimes\mathbb{U}_{\mathfrak{l}_n}
\end{equation}
of $(\mathbb{C}^v)^{\otimes n}\cong\mathbb{C}^{v^n}$, where the sum is over $\mathfrak{l}_1,\mathfrak{l}_2,\dots,\mathfrak{l}_n\in\{\kvec,\rvec,\svec\}$ such that
\begin{equation*}
	\{\mathfrak{l}_1,\mathfrak{l}_2,\dots,\mathfrak{l}_n\}=\{\underbrace{\kvec,\dots,\kvec}_{n-j-h},\underbrace{\rvec,\dots,\rvec}_j,\underbrace{\svec,\dots,\svec}_h\,\}
\end{equation*}
as multisets.
It has dimension
\begin{equation*}
	\binom{n}{n-j-h,j,h}f^jg^h.
\end{equation*}
By virtue of \eqref{adjacency matrix of power} (and the corresponding formula for $\overline{\bm{A}}$), this subspace is a common eigenspace\footnote{It will turn out that the pairs $(\theta_{j,h},\overline{\theta}_{j,h})$ $((j,h)\in\Lambda_n)$ are mutually distinct, and that these subspaces are indeed the \emph{maximal} common eigenspaces of $(\bm{A},\overline{\bm{A}})$; see Section \ref{sec: Charlier-Hermite polynomials}. However, this fact is not necessary in the computation of \eqref{characteristic function} below.} of $(\bm{A},\overline{\bm{A}})$ with eigenvalues $(\theta_{j,h},\overline{\theta}_{j,h})$, where
\begin{equation}\label{eigenvalues}
	\theta_{j,h}=(n-j-h)k+jr+h s, \quad \overline{\theta}_{j,h}=(n-j-h)\overline{k}+j\overline{s}+h\overline{r}.
\end{equation}
Note that $G^{\square n}$ and $\myoverline{G}^{\square n}$ are $nk$-regular and $n\overline{k}$-regular, respectively.
Hence it follows from the above comments that the value of the characteristic function of $\nu_{G^{\square n},\,\myoverline{G}^{\square n}}$ at $(\xi_1,\xi_2)$ is given by
\begin{align}
	\varphi_{\operatorname{tr}}\!\left(\exp\!\left( \vphantom{\frac{\xi}{\sqrt{k}}} \right.\right. & \! \frac{i\xi_1 \bm{A}}{\sqrt{nk}} +\!\left.\left.\frac{i\xi_2 \overline{\bm{A}}}{\sqrt{n\overline{k}}} \right)\!\right) \label{characteristic function} \\ 
	&= \frac{1}{v^n}\!\sum_{(j,h)\in\Lambda_n} \!\! \exp\!\left(\frac{i\xi_1\theta_{j,h}}{\sqrt{nk}}+\frac{i\xi_2\overline{\theta}_{j,h}}{\sqrt{n\overline{k}}}\right)\!\!\binom{n}{n-j-h,j,h} f^j g^h \notag \\
	&= \frac{1}{v^n}\!\sum_{(j,h)\in\Lambda_n} \!\!\! \left(e^{\Delta_{\kvec}}\right)^{\! n-j-h} \!\left(f e^{\Delta_{\rvec}}\right)^j \!\left(g e^{\Delta_{\svec}}\right)^{\!h} \!\binom{n}{n-j-h,j,h} \notag \\
	&= \left( \frac{1}{v}e^{\Delta_{\kvec}}+\frac{f}{v}e^{\Delta_{\rvec}} + \frac{g}{v}e^{\Delta_{\svec}} \right)^{\!\!n} \notag \\
	&= \exp\!\left( n\log\!\left( \frac{1}{v}e^{\Delta_{\kvec}}+ \frac{f}{v}e^{\Delta_{\rvec}} + \frac{g}{v}e^{\Delta_{\svec}} \right) \!\right), \notag
\end{align}
where
\begin{equation}\label{Delta's}
	\Delta_{\kvec}=\frac{i\xi_1k}{\sqrt{nk}}+\frac{i\xi_2\overline{k}}{\sqrt{n\overline{k}}}, \quad \Delta_{\rvec}=\frac{i\xi_1r}{\sqrt{nk}}+\frac{i\xi_2\overline{s}}{\sqrt{n\overline{k}}}, \quad \Delta_{\svec}=\frac{i\xi_1s}{\sqrt{nk}}+\frac{i\xi_2\overline{r}}{\sqrt{n\overline{k}}}.
\end{equation}

Note by \eqref{row sums of P} that
\begin{equation}\label{limits of r-bar and s-bar}
	\frac{\overline{r}}{n}\rightarrow -\sigma, \quad \frac{\overline{s}}{n}\rightarrow -\rho,
\end{equation}
and by \eqref{range of eigenvalues} that
\begin{equation}\label{inequalities for the limits}
	-\min\{\kappa,\overline{\kappa}\}\leqslant \sigma\leqslant 0\leqslant \rho\leqslant\min\{\kappa,\overline{\kappa}\}.
\end{equation}

\subsection{The case \texorpdfstring{$\rho>0$ or $\sigma<0$}{rho>0 or sigma<0}}

First we consider the case where $\rho>0$ or $\sigma<0$.
Then we have $\kappa,\overline{\kappa}>0$ by \eqref{inequalities for the limits}, and moreover $1/v=O(1/n)$.
Note that each of $\Delta_{\kvec}$, $\Delta_{\rvec}$, and $\Delta_{\svec}$ converges by \eqref{limits of r-bar and s-bar}.
On the one hand, by \eqref{fg} we have
\begin{equation}\label{fg limit}
	\frac{fg}{n}\rightarrow \frac{\kappa\overline{\kappa} \omega}{(\rho-\sigma)^2} <\infty,
\end{equation}
so that
\begin{equation*}
	\frac{fg}{n^2}\rightarrow  0.
\end{equation*}
On the other hand, by \eqref{f and g} we have
\begin{equation*}
	\frac{f}{n}\rightarrow \frac{\omega\sigma}{\sigma-\rho}, \quad \frac{g}{n}\rightarrow \frac{\omega\rho}{\rho-\sigma}.
\end{equation*}
Hence we have $\rho=0$ or $\sigma=0$.

For the moment, assume that $\rho=0$ and $\sigma<0$, so that
\begin{equation*}
	\frac{f}{n}\rightarrow \omega, \quad \frac{g}{n}\rightarrow 0.
\end{equation*}
Then it follows from \eqref{fg limit} that
\begin{equation*}
	g\rightarrow g_{\infty}:= \frac{\kappa\overline{\kappa}}{\sigma^2}.
\end{equation*}
In particular, $g$ is bounded.
Moreover, by \eqref{linear traces} we have
\begin{equation*}
	r=-\frac{k+sg}{f}\rightarrow r_{\infty}:=-\frac{\kappa+\sigma g_{\infty}}{\omega},
\end{equation*}
so that $r$ and $\overline{s}=-r-1$ are also bounded, and thus $\Delta_{\rvec}=O(1/n)$.
Since by \eqref{sum to v}
\begin{equation*}
	\frac{f}{v}e^{\Delta_{\rvec}} = e^{\Delta_{\rvec}} -\frac{1+g}{v}e^{\Delta_{\rvec}} = 1+\Delta_{\rvec}-\frac{1+g}{v}+O\!\left(\frac{1}{n^2}\right)\!,
\end{equation*}
it follows using \eqref{limits of r-bar and s-bar} that \eqref{characteristic function} equals
\begin{align}
	\exp & \!\left( n\!\left( \frac{1}{v}e^{\Delta_{\kvec}} + \frac{g}{v}e^{\Delta_{\svec}} +\Delta_{\rvec} -\frac{1+g}{v}  +O\!\left(\frac{1}{n^2} \right)\! \right)\!\right) \label{Poisson} \\
	&= \exp\!\left( \frac{n}{v}e^{\Delta_{\kvec}} + \frac{ng}{v}e^{\Delta_{\svec}} +n\Delta_{\rvec} -\frac{n(1+g)}{v} +O\!\left(\frac{1}{n} \right)\! \right) \notag \\
	&\rightarrow \exp\!\left( \exp\!\left(i\xi_1\sqrt{\kappa}+i\xi_2\sqrt{\overline{\kappa}}\,\right)\!\frac{1}{\omega} + \exp\!\left(\frac{i\xi_1\sigma}{\sqrt{\kappa}}-\frac{i\xi_2\sigma}{\sqrt{\overline{\kappa}}}\right)\!\frac{g_{\infty}}{\omega} \right. \notag \\
	& \qquad\qquad\qquad\qquad \left. +\frac{i\xi_1r_{\infty}}{\sqrt{\kappa}}-\frac{i\xi_2(r_{\infty}+1)}{\sqrt{\overline{\kappa}}} -\frac{1+g_{\infty}}{\omega} \right). \notag
\end{align}
We note that the limit in \eqref{Poisson} is the value at $(\xi_1,\xi_2)$ of the characteristic function of an affine transformation of a bivariate Poisson distribution.

We now show that $G$ is a complete multipartite graph for $n\gg 0$, so that we have $\sigma=-\overline{\kappa}$, $r_{\infty}=0$, $g_{\infty}=\kappa/\overline{\kappa}$, and the limit in \eqref{Poisson} becomes
\begin{equation*}
	\exp\!\left( \exp\!\left(i\xi_1\sqrt{\kappa}+i\xi_2\sqrt{\overline{\kappa}}\,\right)\!\frac{1}{\omega} + \exp\!\left(-\frac{i\xi_1\overline{\kappa}}{\sqrt{\kappa}}+i\xi_2\sqrt{\overline{\kappa}}\right)\!\frac{\kappa}{\omega\overline{\kappa}} -\frac{i\xi_2}{\sqrt{\overline{\kappa}}} -\frac{1}{\overline{\kappa}} \right),
\end{equation*}
which corresponds to the distribution $\nu$ given in Theorem \ref{main theorem}\,(i).
Recall that $\overline{s}$ is bounded.
By virtue of Proposition \ref{geometric} and Lemma \ref{half case}, $\overline{G}$ is one of the following for $n\gg 0$: ($\overline{G}_1$) a conference graph; ($\overline{G}_2$) a disjoint union $pK_q$ of complete graphs; ($\overline{G}_3$) a complete multipartite graph $K_{p\times q}$; ($\overline{G}_4$) a Steiner graph of an $S(2,m,d)$; ($\overline{G}_5$) a Latin square graph of a $\mathit{TD}(m,e)$.
Case ($\overline{G}_1$) is impossible as $v$ would also be bounded.
For Case ($\overline{G}_3$), we have $\sigma=0$, a contradiction.
If $\overline{G}$ is a Steiner graph of an $S(2,m,d)$ as in Case ($\overline{G}_4$), then $m$ is bounded since $\overline{s}=-m$.
However, since $\overline{k}$ and $v$ are linear and quadratic in $d$, respectively, $\overline{\kappa}$ and $\omega$ cannot be both finite and non-zero, a contradiction.
The same argument shows that Case ($\overline{G}_5$) is also impossible.
Hence we are left with Case ($\overline{G}_2$), so that we have (i) in Theorem \ref{main theorem}.

If $\rho>0$ and $\sigma=0$, then switching the roles of $G$ and $\overline{G}$ gives (ii) in Theorem \ref{main theorem}.
This completes the case where $\rho>0$ or $\sigma<0$.

\subsection{The case \texorpdfstring{$\rho=\sigma=0$}{rho=sigma=0}}
\label{sec: rho=sigma=0}

Next we deal with the case where $\rho=\sigma=0$.
Note that $\Delta_{\kvec}$ converges, and that $\Delta_{\rvec},\Delta_{\svec}\rightarrow 0$ in view of \eqref{range of eigenvalues} and \eqref{limits of r-bar and s-bar}.
From \eqref{quadratic trace 1}, \eqref{quadratic trace 2}, and \eqref{quadratic trace 3} it follows that
\begin{equation*}
	r^2f\leqslant kv, \quad -r\overline{s}f\leqslant k\overline{k}, \quad \myoverline{s}^2f\leqslant \overline{k}v,
\end{equation*}
from which it follows that
\begin{equation}\label{estimation}
	\bigl|\Delta_{\rvec}^2\bigr|f \leqslant \left(\frac{\xi_1^2 r^2}{nk}-2\frac{|\xi_1\xi_2|r\overline{s}}{n\sqrt{k\overline{k}}}+\frac{\xi_2^2\myoverline{s}^2}{n\overline{k}}\right)\!f  \leqslant \frac{\xi_1^2v}{n}+2\frac{|\xi_1\xi_2|\sqrt{k\overline{k}}}{n}+\frac{\xi_2^2v}{n}.
\end{equation}
Hence $\Delta_{\rvec}^2f$ is bounded.
Likewise, we can show that $\Delta_{\svec}^2g$ is bounded.
We also need the following identities which are immediate from \eqref{linear traces}--\eqref{quadratic trace 3}:
\begin{gather}
	\Delta_{\kvec}+\Delta_{\rvec}f+\Delta_{\svec}g=0, \label{linear in Delta} \\
	\Delta_{\kvec}^2+\Delta_{\rvec}^2f+\Delta_{\svec}^2g=-\frac{\xi_1^2v}{n}-\frac{\xi_2^2v}{n}. \label{quadratic in Delta}
\end{gather}

For the moment, assume that $\kappa>0$ or $\overline{\kappa}>0$.
Note that $1/v=O(1/n)$ in this case.
Moreover, we have
\begin{equation*}
	\frac{f}{v}e^{\Delta_{\rvec}}=\left(1+\Delta_{\rvec}+\frac{\Delta_{\rvec}^2}{2}+O\!\left(\Delta_{\rvec}^3\right)\!\right)\!\frac{f}{v}=\left(1+\Delta_{\rvec}+\frac{\Delta_{\rvec}^2}{2}\right)\!\frac{f}{v}+o\!\left(\frac{1}{n}\right),
\end{equation*}
and similarly for $ge^{\Delta_{\svec}}/v$.
Hence it follows from \eqref{sum to v}, \eqref{linear in Delta}, and \eqref{quadratic in Delta} that \eqref{characteristic function} equals
\begin{align}
	\exp & \!\left( n\log\!\left( \frac{1}{v}e^{\Delta_{\kvec}}+\left(1+\Delta_{\rvec}+\frac{\Delta_{\rvec}^2}{2}\right)\!\frac{f}{v}\right.\right. \label{similar computations will be done later} \\
	& \qquad\qquad\qquad\qquad +\left.\left. \left(1+\Delta_{\svec}+\frac{\Delta_{\svec}^2}{2}\right)\!\frac{g}{v}+o\!\left(\frac{1}{n}\right)\! \right)\!\right) \notag \\
	&= \exp\!\left( n\log\!\left( 1+ \frac{1}{v}e^{\Delta_{\kvec}} -\frac{\Delta_{\kvec}}{v} \right.\right. \notag \\
	&  \qquad\qquad\qquad\qquad \left.\left. +\Bigl(\Delta_{\rvec}^2f+\Delta_{\svec}^2g\Bigr)\frac{1}{2v}-\frac{1}{v}+o\!\left(\frac{1}{n}\right)\!\right)\!\right) \notag \\
	&= \exp\!\left(n\!\left( \frac{1}{v}e^{\Delta_{\kvec}} -\frac{\Delta_{\kvec}}{v}+\Bigl(\Delta_{\rvec}^2f+\Delta_{\svec}^2g\Bigr)\frac{1}{2v}-\frac{1}{v}+o\!\left(\frac{1}{n}\right)\!\right)\!\right) \notag \\
	&\rightarrow \exp\!\left(\exp\!\left(i\xi_1\sqrt{\kappa}+i\xi_2\sqrt{\overline{\kappa}}\,\right)\!\frac{1}{\omega}-\frac{i\xi_1\sqrt{\kappa}+i\xi_2\sqrt{\overline{\kappa}}}{\omega} \right. \notag \\
	& \qquad\qquad\qquad\qquad \left. -\frac{\bigl(\xi_1\sqrt{\overline{\kappa}}-\xi_2\sqrt{\kappa}\,\bigr)^{\!2}}{2\omega}-\frac{1}{\omega}\right). \notag
\end{align}
It is a straightforward matter to show that this corresponds to the distribution $\nu$ in Theorem \ref{main theorem}\,(iii).

Finally, assume that $\kappa=\overline{\kappa}=0$.
In this case, we have $\omega=0$, i.e.,
\begin{equation*}
	\frac{v}{n}\rightarrow 0.
\end{equation*}
Note that $\Delta_{\kvec},\Delta_{\rvec},\Delta_{\svec}=O((v/n)^{1/2})$.
Moreover, it follows from \eqref{estimation} that $\Delta_{\rvec}^2f=O(v/n)$, and likewise we have $\Delta_{\svec}^2g=O(v/n)$.
Hence it follows from \eqref{sum to v}, \eqref{linear in Delta}, and \eqref{quadratic in Delta} that \eqref{characteristic function} equals
\begin{align*}
	\exp & \!\left( n\log\!\left( \! \left(1+\Delta_{\kvec}+\frac{\Delta_{\kvec}^2}{2}\right)\!\frac{1}{v}+\left(1+\Delta_{\rvec}+\frac{\Delta_{\rvec}^2}{2}\right)\!\frac{f}{v}\right.\right. \\
	& \qquad\qquad\qquad\qquad +\left.\left. \left(1+\Delta_{\svec}+\frac{\Delta_{\svec}^2}{2}\right)\!\frac{g}{v}+O\!\left(\!\left(\frac{v}{n}\right)^{\!\frac{3}{2}}\right)\!\frac{1}{v} \right)\!\right) \\
	&= \exp\!\left( n\log\!\left( \! 1-\frac{\xi_1^2}{2n}-\frac{\xi_2^2}{2n}+O\!\left(\!\left(\frac{v}{n}\right)^{\!\frac{3}{2}}\right)\!\frac{1}{v} \right)\!\right) \\
	&= \exp\!\left(n\!\left(\!-\frac{\xi_1^2}{2n}-\frac{\xi_2^2}{2n}+O\!\left(\!\left(\frac{v}{n}\right)^{\!\frac{3}{2}}\right)\!\frac{1}{v}\right)\!\right) \\
	&= \exp\!\left(-\frac{\xi_1^2}{2}-\frac{\xi_2^2}{2}+O\!\left(\!\left(\frac{v}{n}\right)^{\!\frac{1}{2}}\right)\!\right) \\
	&\rightarrow \exp\!\left(-\frac{\xi_1^2}{2}-\frac{\xi_2^2}{2}\right).
\end{align*}
This corresponds to the standard bivariate Gaussian distribution, and hence we have (iv) in Theorem \ref{main theorem}.

This completes the proof of Theorem \ref{main theorem}.

\section{Examples}\label{sec: examples}

The graph $G$ (which we recall is a function in $n$) is already identified for (i) and (ii) in Theorem \ref{main theorem}, whereas (iv) is a degenerate case and is easily realized as it only requires $v/n\rightarrow 0$.
Below are some examples for (iii) in Theorem \ref{main theorem}, i.e., such that $\kappa>0$ or $\overline{\kappa}>0$, and $\rho=\sigma=0$.

\begin{ex}
Consider the imprimitive strongly regular graphs $pK_q$ and $K_{p\times q}$.
Assume that $pq$ is (essentially) linear in $n$ and that $q/n\rightarrow 0$.
Then we have $\kappa=0$ and $\overline{\kappa}>0$ in Theorem \ref{main theorem}\,(iii) for $pK_q$, and $\kappa>0$ and $\overline{\kappa}=0$ for $K_{p\times q}$.
\end{ex}

\begin{ex}[Paley graphs]
Let $q$ be a prime power with $q\equiv 1\,(\operatorname{mod}4)$.
The \emph{Paley graph} $\operatorname{Paley}(q)$ has vertex set $\mathbb{F}_q$ (the finite field with $q$ elements), where two distinct vertices are adjacent if and only if their difference is a square.
It is easy to see that $\operatorname{Paley}(q)$ is a conference graph.
See \cite[Sections 9.1.1 and 9.1.2]{BH2012B}.
Hence if we take $n$ to be linear in $q$, then it follows from \eqref{half case parameters} and \eqref{half case eigenvalues} that we are in Theorem \ref{main theorem}\,(iii) with $\kappa=\overline{\kappa}>0$.
\end{ex}

\begin{ex}[Symplectic graphs]
Let $q$ be a prime power, and let $\ell\geqslant 2$ be an integer.
We endow $\mathbb{F}_q^{2\ell}$ with a non-degenerate symplectic form.
The \emph{Symplectic graph} $\mathit{Sp}_{2\ell}(q)$ has as vertex set the set of one-dimensional subspaces (i.e., projective points) of $\mathbb{F}_q^{2\ell}$, where two distinct vertices are adjacent if and only if they are orthogonal.
The graph $\mathit{Sp}_{2\ell}(q)$ is strongly regular with parameters $(v,k,\lambda,\mu)$, where
\begin{equation*}
	v=\frac{q^{2\ell}-1}{q-1}, \quad k=\frac{q^{2\ell-1}-q}{q-1}, \quad \lambda=\frac{q^{2\ell-2}-2q+1}{q-1},  \quad \mu=\frac{q^{2\ell-2}-1}{q-1},
\end{equation*}
and with restricted eigenvalues
\begin{equation*}
	r=q^{\ell-1}-1, \quad s=-q^{\ell-1}-1.
\end{equation*}
Fix $q$ and let $\ell\rightarrow\infty$.
If $n$ is linear in $q^{2\ell}$ then again we are in Theorem \ref{main theorem}\,(iii) with $\kappa=\overline{\kappa}/(q-1)>0$.
There are many other infinite families of strongly regular graphs related to finite geometry; see \cite{BrouwerWWW} and \cite[Section 9.9]{BH2012B}.
\end{ex}

\begin{ex}\label{many TDs}
Let $q$ be a prime power.
Let $H_1,H_2,\dots,H_m$ be distinct one-dimensional subspaces of $\mathbb{F}_q^2$,
where $1\leqslant m\leqslant q$.
For $j=1,2,\dots,m$, let $\mathscr{P}_j$ be the set of $q$ parallel affine subspaces of $\mathbb{F}_q^2$ with direction $H_j$, i.e.,
\begin{equation*}
	\mathscr{P}_j=\{H_j+x:x\in\mathbb{F}_q^2\} \quad (j=1,2,\dots,m).
\end{equation*}
Let $\mathscr{P}=\mathscr{P}_1\sqcup\dots\sqcup\mathscr{P}_m$ and $\mathscr{B}=\mathbb{F}_q^2$.
Consider the incidence structure $(\mathscr{P},\mathscr{B},\mathscr{I})$, where a point $H_j+x$ and a block $y$ are incident if and only if $y\in H_j+x$.
Then it is easy to see that $(\mathscr{P},\mathscr{B},\mathscr{I})$ is a $\mathit{TD}(m,q)$.
Hence if we take both $q^2$ and $mq$ to be linear in $n$, then the corresponding Latin square graph attains Theorem \ref{main theorem}\,(iii) with $\kappa>0$.
We may view $\operatorname{Paley}(q^2)$ (for odd $q$) in this way with $m=(q+1)/2$, as $\mathbb{F}_{q^2}\cong\mathbb{F}_q^2$.
We note that, unlike the previous examples, \emph{any} $\kappa,\overline{\kappa}\geqslant 0$ can be achieved here as limits.
There is also a more general construction of strongly regular graphs from cyclotomy, all giving rise to examples of Theorem \ref{main theorem}\,(iii); cf.~\cite[Section 9.8.5]{BH2012B}.
\end{ex}

\section{Bivariate Charlier--Hermite polynomials}\label{sec: Charlier-Hermite polynomials}

It is more natural to understand our approach of considering the pair $(G^{\square n},\myoverline{G}^{\square n})$ with $G$ and $\overline{G}$ both strongly regular, in terms of \emph{association schemes}; cf.~\cite[Chapter 11]{BH2012B}.
An \emph{association scheme with $d$ classes} is an edge decomposition of a complete graph $K_v$ into $d$ graphs $G_1,G_2,\dots,G_d$, whose adjacency matrices $A_1,A_2,\dots,A_d$, together with $A_0:=I$, form a basis of a $(d+1)$-dimensional matrix $*$-algebra $M$ over $\mathbb{C}$, called the \emph{Bose--Mesner algebra}.
Thus, the association schemes with one class are the same thing as the complete graphs, and those with two classes are the pairs of strongly regular graphs and their complements (cf.~\eqref{quadratic equation}).
Since $M$  consists of symmetric matrices only, it follows that $M$ is commutative and has precisely $d+1$ maximal common eigenspaces.

With the notation in the previous sections, we now consider the association scheme $G,\overline{G}$ with Bose--Mesner algebra $M=\langle I,A,\overline{A}\rangle$.
Observe that the $n^{\mathrm{th}}$ symmetric tensor space $\operatorname{Sym}^n(M)$ of $M$ is again the Bose--Mesner algebra of another association scheme consisting of the graphs on $V^n$ with adjacency matrices (cf.~\eqref{Lambda})
\begin{equation}\label{adjacency matrices of extension}
	\bm{A}_{a,b}=\!\sum_{B_1,B_2,\dots,B_n} \!\!\!\! B_1\otimes B_2 \otimes \dots \otimes B_n \quad ((a,b)\in\Lambda_n),
\end{equation}
where the sum is over $B_1,B_2,\dots,B_n\in\{I,A,\overline{A}\}$ such that
\begin{equation*}
	\{B_1,B_2,\dots,B_n\}=\{\underbrace{I,\dots,I}_{n-a-b},\underbrace{A,\dots,A}_a,\underbrace{\overline{A},\dots,\overline{A}}_b\,\}
\end{equation*}
as multisets; cf.~\cite[Section 2.5]{Delsarte1973PRRS}.
In particular, we have (cf.~\eqref{adjacency matrix of power})
\begin{equation*}
	\bm{A}=\bm{A}_{1,0}, \quad \overline{\bm{A}}=\bm{A}_{0,1}.
\end{equation*}
Moreover, the subspaces \eqref{common eigenspace} for $(j,h)\in\Lambda_n$ are the maximal common eigenspaces of $\operatorname{Sym}^n(M)$.
Mizukawa and Tanaka \cite{MT2004PAMS} showed that the corresponding eigenvalues of $\bm{A}_{a,b}$ are expressed as
\begin{equation*}
	k_{a,b}\hspace{.01in} \mathcal{K}_{a,b}(j,h) \quad ((j,h)\in\Lambda_n),
\end{equation*}
where
\begin{equation}\label{degree}
	k_{a,b}=\binom{n}{n-a-b,a,b} k^a\myoverline{k}^b
\end{equation}
denotes the degree of the graph with adjacency matrix $\bm{A}_{a,b}$, and $\mathcal{K}_{a,b}(j,h)$ is the terminating Aomoto--Gelfand hypergeometric series
\begin{equation*}
	\mathcal{K}_{a,b}(j,h)=\sum_{ \ell_1,\ell_2,\ell_3,\ell_4} \!\!\! \frac{ (-a)_{\ell_1+\ell_3} (-b)_{\ell_2+\ell_4} (-j)_{\ell_1+\ell_2} (-h)_{\ell_3+\ell_4} }{(-n)_{\ell_1+\ell_2+\ell_3+\ell_4} \ell_1! \ell_2! \ell_3! \ell_4! } u_1^{\ell_1} u_2^{\ell_2} u_3^{\ell_3} u_4^{\ell_4},
\end{equation*}
where the sum is over $\ell_1,\ell_2,\ell_3,\ell_4=0,1,\dots,n$ with $\ell_1+\ell_2+\ell_3+\ell_4\leqslant n$, and
\begin{equation*}
	u_1= 1-\frac{r}{k}, \quad u_2=1-\frac{\overline{s}}{\overline{k}}, \quad u_3=1-\frac{s}{k}, \quad u_4=1-\frac{\overline{r}}{\overline{k}}.
\end{equation*}
We note that\footnote{This in particular shows that $\operatorname{Sym}^n(M)$ is generated by $\bm{A}$ and $\overline{\bm{A}}$ which correspond to linear polynomials, so that the subspaces \eqref{common eigenspace} are also the maximal common eigenspaces of $(\bm{A},\overline{\bm{A}})$.} $\mathcal{K}_{a,b}(j,h)$ is a polynomial in $j$ and $h$ with (total) degree $a+b$.
The $\mathcal{K}_{a,b}$ are the \emph{bivariate Krawtchouk polynomials} and are also known as the \emph{Rahman polynomials}; cf.~\cite{Griffiths1971AJS,Grunbaum2007SIGMA,HR2008SIGMA,IT2012TAMS}.
For $(j,h)\in\Lambda_n$, we have (cf.~\cite{Godsil2010M,Griffiths1971AJS,Tarnanen1987P})
\begin{align}
	\sum_{(a,b)\in\Lambda_n} \!\!\! k_{a,b}\hspace{.01in} & \mathcal{K}_{a,b}(j,h)\hspace{.01in} \xi_1^a\xi_2^b \label{generating function} \\
	& =(1+k\xi_1+\overline{k}\xi_2)^{n-j-h} (1+r\xi_1+\overline{s}\xi_2)^j (1+s\xi_1+\overline{r}\xi_2)^h, \notag
\end{align}
where $\xi_1$ and $\xi_2$ are indeterminates.
It should be remarked that, if $j$ and $h$ are also indeterminates, then the RHS above belongs to the formal power series ring $\mathbb{C}[j,h][[\xi_1,\xi_2]]$ over the polynomial ring $\mathbb{C}[j,h]$, and the coefficients of $\xi_1^a\xi_2^b$ on both sides still agree whenever $a+b\leqslant n$.

The goal of this section is to construct a family of bivariate hypergeometric orthogonal polynomials with respect to the probability measure $\nu$ in Theorem \ref{main theorem}\,(iii) as limits of the $\mathcal{K}_{a,b}$, so that we will assume from now on that
\begin{equation*}
	\omega=\kappa+\overline{\kappa}>0, \quad \rho=\sigma=0.
\end{equation*}
(For the other cases in Theorem \ref{main theorem}, similar discussions give rise to the bivariate Charlier and Hermite polynomials; see Remark \ref{OPs for other cases} below.)
For the convergence, however, we will work with the normalization $\sqrt{\vphantom{k}\smash{k_{a,b}}}\, \mathcal{K}_{a,b}$ which gives the eigenvalues of $\bm{A}_{a,b}/\sqrt{\vphantom{k}\smash{k_{a,b}}}$.
Moreover, we also make the change of variables (cf.~\eqref{eigenvalues})
\begin{equation}\label{x,y}
	x=\frac{\theta_{j,h}}{\sqrt{nk}}, \quad y=\frac{\overline{\theta}_{j,h}}{\sqrt{n\overline{k}}},
\end{equation}
or equivalently,
\begin{align}
	j&=\frac{(\overline{k}-\overline{r})\sqrt{nk}}{v(r-s)}x - \frac{(k-s)\sqrt{n\overline{k}}}{v(r-s)}y +\frac{nf}{v}, \label{j in x,y} \\
	h&=\frac{(\overline{k}-\overline{s})\sqrt{nk}}{v(s-r)}x - \frac{(k-r)\sqrt{n\overline{k}}}{v(s-r)}y +\frac{ng}{v},\label{h in x,y}
\end{align}
where we have used \eqref{sum to v}, \eqref{row sums of P}, and \eqref{f and g}.
Thus, we define the bivariate polynomials $\hat{\mathcal{K}}_{a,b}(x,y)$ by
\begin{equation*}
	\hat{\mathcal{K}}_{a,b}(x,y)=\sqrt{\vphantom{k}\smash{k_{a,b}}}\,\mathcal{K}_{a,b}(j,h),
\end{equation*}
where the variables $(j,h)$ and $(x,y)$ are related as above.
Then, the $\hat{\mathcal{K}}_{a,b}$ satisfy the following orthogonality relation:
\begin{equation}\label{orthogonality for Krawtchouk}
	\int_{\mathbb{R}^2} \! \hat{\mathcal{K}}_{a,b}(x,y)\hspace{.01in}\hat{\mathcal{K}}_{c,d}(x,y)\,\nu_{G^{\square n},\,\myoverline{G}^{\square n}}(dxdy)=\varphi_{\operatorname{tr}}\!\left(\frac{\bm{A}_{a,b}\bm{A}_{c,d}}{\sqrt{k_{a,b}k_{c,d}}}\right)=\delta_{a,c}\delta_{b,d}
\end{equation}
for $(a,b),(c,d)\in\Lambda_n$.
We also note that
\begin{equation}\label{linear K polynomials}
	\hat{\mathcal{K}}_{1,0}(x,y)=x, \quad \hat{\mathcal{K}}_{0,1}(x,y)=y.
\end{equation}

Recall $\Delta_{\kvec},\Delta_{\rvec}$, and $\Delta_{\svec}$ from \eqref{Delta's}, where $\xi_1$ and $\xi_2$ are indeterminates in the present context, rather than real scalars.
To compute the limits of the $\hat{\mathcal{K}}_{a,b}$, we consider instead of \eqref{generating function}
\begin{equation}\label{normalized generating function}
	(1-i\Delta_{\kvec})^{n-j-h} (1-i\Delta_{\rvec})^j (1-i\Delta_{\svec})^h,
\end{equation}
which is an element of $\mathbb{C}[x,y][[\xi_1,\xi_2]]$ via \eqref{j in x,y} and \eqref{h in x,y}, where we view $x$ and $y$ as indeterminates.
On the one hand, in view of the remark after \eqref{generating function}, the terms in \eqref{normalized generating function} of degree at most $n$ (in $\xi_1$ and $\xi_2$) are
\begin{equation}
	\frac{k_{a,b}\hspace{.01in} \mathcal{K}_{a,b}(j,h)}{\sqrt{n^{a+b}k^a\myoverline{k}^b}}\xi_1^a\xi_2^b=\sqrt{\dfrac{k_{a,b}}{n^{a+b}k^a\myoverline{k}^b}}\,\hat{\mathcal{K}}_{a,b}(x,y)\hspace{.01in} \xi_1^a\xi_2^b \quad ((a,b)\in\Lambda_n),
\end{equation}
where by \eqref{degree} we have
\begin{equation}
	\frac{k_{a,b}}{n^{a+b}k^a\myoverline{k}^b} \rightarrow \frac{1}{a!b!}
\end{equation}
for fixed $a,b=0,1,2,\dots$.
On the other hand, since
\begin{equation*}
	n-j-h=\frac{\sqrt{nk}\,x+\!\sqrt{n\overline{k}}\,y+n}{v}
\end{equation*}
by \eqref{sum to v} and \eqref{row sums of P}, we have
\begin{equation}\label{first part converges}
	(1-i\Delta_{\kvec})^{n-j-h} \rightarrow \left(1+\!\sqrt{\kappa}\,\xi_1+\!\sqrt{\overline{\kappa}}\,\xi_2\right)^{\!\left(\!\sqrt{\kappa}\,x+\sqrt{\overline{\kappa}}\,y+1\right)/\omega}
\end{equation}
as elements of $\mathbb{C}[x,y][[\xi_1,\xi_2]]$.
The limit of the latter two factors of \eqref{normalized generating function} can be computed in the same manner as in the proof of Theorem \ref{main theorem}\,(iii); cf.~Section \ref{sec: rho=sigma=0}.
Recall that $\Delta_{\rvec},\Delta_{\svec}\rightarrow 0$.
Since the coefficients of $f\Delta_{\rvec}^2$ and $g\Delta_{\svec}^2$ are bounded as in \eqref{estimation}, we have
\begin{equation*}
	f\Delta_{\rvec}^{\ell},\,g\Delta_{\svec}^{\ell}\rightarrow 0 \quad (\ell=3,4,\dots).
\end{equation*}
Likewise, using \eqref{range of eigenvalues} and \eqref{row sums of P}, we can routinely show that
\begin{equation*}
	\biggl(j-\frac{nf}{v}\biggr)\Delta_{\rvec}^{\ell}, \biggl(h-\frac{ng}{v}\biggr)\Delta_{\svec}^{\ell} \rightarrow 0 \quad (\ell=2,3,\dots).
\end{equation*}
Since $\Delta_{\rvec}^{\ell},\Delta_{\svec}^{\ell}$ $(\ell=1,2,\dots)$ are homogeneous of degree $\ell$ in $\xi_1$ and $\xi_2$, it follows from \eqref{row sums of P}, \eqref{linear in Delta}, and \eqref{quadratic in Delta} that (cf.~\eqref{similar computations will be done later})
\begin{align}
	(1-i\Delta_{\rvec})^j & (1-i\Delta_{\svec})^h \label{latter part converges} \\
	&= \exp\!\left(-j\sum_{\ell=1}^{\infty}\frac{(i\Delta_{\rvec})^{\ell}}{\ell} -h\sum_{\ell=1}^{\infty}\frac{(i\Delta_{\svec})^{\ell}}{\ell}\right) \notag \\
	&= \exp\!\left( -\frac{nf}{v}\!\left(\!(i\Delta_{\rvec})+\frac{(i\Delta_{\rvec})^2}{2}\right)\! -\frac{ng}{v}\!\left(\!(i\Delta_{\svec})+\frac{(i\Delta_{\svec})^2}{2}\right) \right. \notag \\
	& \qquad\qquad\qquad \left. -\biggl(j-\frac{nf}{v}\biggr)(i\Delta_{\rvec})-\biggl(h-\frac{ng}{v}\biggr)(i\Delta_{\svec}) +o(1) \right) \notag \\
	&= \exp\!\left( \frac{n(i\Delta_{\kvec})}{v} -\frac{n}{2v}\Bigl(f(i\Delta_{\rvec})^2+g(i\Delta_{\svec})^2\Bigr) \right. \notag \\
	& \qquad\qquad\qquad \left. + \frac{\bigl(\sqrt{\overline{k}}\,x-\sqrt{k}\,y\bigr)\bigl(\sqrt{\overline{k}}\,\xi_1-\sqrt{k}\,\xi_2\bigr)}{v} +o(1) \right) \notag \\
	&\rightarrow \exp\!\left( -\frac{\sqrt{\kappa}\,\xi_1+\sqrt{\overline{\kappa}}\,\xi_2}{\omega} -\frac{\bigl(\sqrt{\overline{\kappa}}\,\xi_1-\sqrt{\kappa}\,\xi_2\bigr)^{\!2}}{2\omega} \right. \notag \\
	& \qquad\qquad\qquad \left. +\frac{\bigl(\sqrt{\overline{\kappa}}\,x-\sqrt{\kappa}\,y\bigr)\bigl(\sqrt{\overline{\kappa}}\,\xi_1-\sqrt{\kappa}\,\xi_2\bigr)}{\omega} \right). \notag
\end{align}
From \eqref{normalized generating function}--\eqref{latter part converges} it follows that there are polynomials $\mathcal{CH}_{a,b}(x,y)$ $(a,b=0,1,2,\dots)$ in $x$ and $y$ such that
\begin{equation*}
	\hat{\mathcal{K}}_{a,b}(x,y) \rightarrow \mathcal{CH}_{a,b}(x,y) \quad (a,b=0,1,2,\dots),
\end{equation*}
and that the generating function
\begin{equation*}
	\sum_{a,b=0}^{\infty}\frac{\mathcal{CH}_{a,b}(x,y)}{\sqrt{a!b!}}\xi_1^a\xi_2^b
\end{equation*}
equals the product of the limits in \eqref{first part converges} and \eqref{latter part converges} as elements in $\mathbb{C}[x,y][[\xi_1,\xi_2]]$.
We call the $\mathcal{CH}_{a,b}$ the \emph{bivariate Charlier--Hermite polynomials}.
We note by \eqref{linear K polynomials} that
\begin{equation}\label{linear CH polynomials}
	\mathcal{CH}_{1,0}(x,y)=x, \quad \mathcal{CH}_{0,1}(x,y)=y.
\end{equation}
The $\mathcal{CH}_{a,b}$ depend on two parameters $\kappa,\overline{\kappa}\geqslant 0$, where $\omega=\kappa+\overline{\kappa}>0$.

From the generating function we may derive an explicit formula for the $\mathcal{CH}_{a,b}$.
Observe that the limit in \eqref{first part converges} equals
\begin{equation*}
	\sum_{\ell_1,\ell_2=0}^{\infty} \!\! \frac{\bigl(-\frac{\sqrt{\kappa}\,x+\sqrt{\overline{\kappa}}\,y+1}{\omega}\bigr)_{\ell_1+\ell_2}}{\ell_1!\ell_2!} (-1)^{\ell_1+\ell_2} (\sqrt{\kappa}\,\xi_1)^{\ell_1} (\sqrt{\overline{\kappa}}\,\xi_2)^{\ell_2},
\end{equation*}
whereas the limit in \eqref{latter part converges} equals the product of the following two elements:
\begin{gather*}
	\sum_{\ell_3,\ell_4,\ell_5=0}^{\infty}\frac{1}{\ell_3!\ell_4!\ell_5!}\!\left(\!-\frac{\overline{\kappa}\xi_1^2}{2\omega}\right)^{\!\!\ell_3}\!\!\left(\!-\frac{\kappa \xi_2^2}{2\omega}\right)^{\!\!\ell_4}\!\!\left(\!\frac{\sqrt{\kappa\overline{\kappa}}\,\xi_1\xi_2}{\omega}\right)^{\!\!\ell_5}, \\
	\sum_{\ell_6,\ell_7=0}^{\infty} \frac{1}{\ell_6!\ell_7!} \!\left(\frac{\overline{\kappa}x-\!\sqrt{\kappa\overline{\kappa}}\,y-\!\sqrt{\kappa}}{\omega}\,\xi_1\right)^{\!\!\ell_6}\!\!\left(\frac{\kappa y-\!\sqrt{\kappa\overline{\kappa}}\,x-\!\sqrt{\overline{\kappa}}}{\omega}\,\xi_2\right)^{\!\!\ell_7}.
\end{gather*}
Picking out the coefficient of $\xi_1^a\xi_2^b$ in the product of the above three elements and simplifying the result using
\begin{equation*}
	\ell_6=a-\ell_1-2\ell_3-\ell_5, \quad \ell_7=b-\ell_2-2\ell_4-\ell_5,
\end{equation*}
it follows that $\mathcal{CH}_{a,b}(x,y)$ equals
\begin{equation*}
	\frac{\left(\overline{\kappa}x-\!\sqrt{\kappa\overline{\kappa}}\,y-\!\sqrt{\kappa}\,\right)^{\!a} \!\left(\kappa y-\!\sqrt{\kappa\overline{\kappa}}\,x-\!\sqrt{\overline{\kappa}}\,\right)^{\!b}}{\sqrt{a!b!}\,\omega^{a+b}}
\end{equation*}
times
\begin{multline*}
	\sum_{\ell_1,\ell_2,\ell_3,\ell_4,\ell_5=0}^{\infty} \!\!\!\!\!\!\! \frac{(-a)_{\ell_1+2\ell_3+\ell_5}(-b)_{\ell_2+2\ell_4+\ell_5}\bigl(-\frac{\sqrt{\kappa}\,x+\sqrt{\overline{\kappa}}\,y+1}{\omega}\bigr)_{\ell_1+\ell_2}}{\ell_1!\ell_2!\ell_3!\ell_4!\ell_5!} \\
	\times \frac{(-\frac{1}{2})^{\ell_3+\ell_4}\sqrt{\kappa}^{\,\ell_1+2\ell_4+\ell_5} \sqrt{\overline{\kappa}}^{\,\ell_2+2\ell_3+\ell_5} \omega^{\ell_1+\ell_2+\ell_3+\ell_4+\ell_5}}{\left(\overline{\kappa}x-\!\sqrt{\kappa\overline{\kappa}}\,y-\!\sqrt{\kappa}\,\right)^{\!\ell_1+2\ell_3+\ell_5}\!\left(\kappa y-\!\sqrt{\kappa\overline{\kappa}}\,x-\!\sqrt{\overline{\kappa}}\,\right)^{\!\ell_2+2\ell_4+\ell_5}}.
\end{multline*}
This expression is not of Aomoto--Gelfand type, but is still a terminating hypergeometric series in the sense of Horn.

We next obtain five-term recurrence relations for the $\mathcal{CH}_{a,b}$ as limits of those for the $\hat{\mathcal{K}}_{a,b}$.
Note that
\begin{equation*}
	A\overline{A}=A(J-A-I)=(\overline{k}-\overline{\mu})A+(k-\mu)\overline{A}
\end{equation*}
by \eqref{quadratic equation} and \eqref{parameters of complement}.
From this, \eqref{adjacency matrix of power}, \eqref{quadratic equation}, and \eqref{adjacency matrices of extension}, it follows that
\begin{align*}
	\bm{A}\bm{A}_{a,b} &= (a+1)\bm{A}_{a+1,b} +(a+1)(\overline{k}-\overline{\mu})\bm{A}_{a+1,b-1} + (a\lambda+b(k-\mu))\bm{A}_{a,b} \\
	& \quad +(b+1)\mu\bm{A}_{a-1,b+1} +(n-a-b+1)k\bm{A}_{a-1,b}, \\
	\overline{\bm{A}}\bm{A}_{a,b} &= (b+1)\bm{A}_{a,b+1} +(a+1)\overline{\mu}\bm{A}_{a+1,b-1} +(a(\overline{k}-\overline{\mu})+b\overline{\lambda}) \bm{A}_{a,b} \\
	& \quad +(b+1)(k-\mu)\bm{A}_{a-1,b+1} +(n-a-b+1)\overline{k}\bm{A}_{a,b-1}.
\end{align*}
Observe that the above identities correspond to the expansions of
\begin{equation*}
	\theta_{j,h}\cdot k_{a,b}\hspace{.01in} \mathcal{K}_{a,b}(j,h), \quad \overline{\theta}_{j,h}\cdot k_{a,b}\hspace{.01in} \mathcal{K}_{a,b}(j,h),
\end{equation*}
respectively, in terms of the polynomials $k_{c,d}\hspace{.01in} \mathcal{K}_{c,d}$ $((c,d)\in\Lambda_n)$.
See also \cite[Section 6]{IT2012TAMS}.
By \eqref{x,y}, we now rewrite these as recurrence relations for the $\hat{\mathcal{K}}_{a,b}=\sqrt{\vphantom{k}\smash{k_{a,b}}}\,\mathcal{K}_{a,b}$ and then let $n\rightarrow\infty$.
For example, the coefficient of $\hat{\mathcal{K}}_{a,b}$ in $x\hspace{.01in}\hat{\mathcal{K}}_{a,b}$ is given by
\begin{equation}\label{central coefficient}
	\frac{a\lambda+b(k-\mu)}{\sqrt{nk}}=\frac{a(r+s+k+rs)-brs}{k}\sqrt{\frac{k}{n}},
\end{equation}
where we have used \eqref{lamda,mu in terms of k,r,s}.
We claim that this coefficient converges to
\begin{equation*}
	(a\kappa+b\overline{\kappa})\frac{\sqrt{\kappa}}{\omega}.
\end{equation*}
To see this, we note that
\begin{equation*}
	-rs(f+g)=kv-k^2+(r+s)k
\end{equation*}
by virtue of \eqref{linear traces} and \eqref{quadratic trace 1}, from which it follows that (cf.~\eqref{sum to v})
\begin{equation}\label{curious limit}
	-\frac{rs}{k}\rightarrow\frac{\overline{\kappa}}{\omega}.
\end{equation}
If $\kappa>0$ then the claim follows directly from \eqref{curious limit}.
On the other hand, if $\kappa=0$ then the coefficient \eqref{central coefficient} converges to zero since the first factor of the RHS is bounded by \eqref{range of eigenvalues} and \eqref{curious limit}, and hence the claim again holds.
The other coefficients can be computed similarly and more easily using \eqref{lamda,mu in terms of k,r,s}, \eqref{degree}, and \eqref{curious limit} (and the complement versions of \eqref{lamda,mu in terms of k,r,s} and \eqref{curious limit}).
It follows that
\begin{align*}
	x\,\mathcal{CH}_{a,b} &= \sqrt{a+1}\,\mathcal{CH}_{a+1,b} +\!\sqrt{(a+1)b}\,\frac{\kappa\sqrt{\overline{\kappa}}}{\omega}\,\mathcal{CH}_{a+1,b-1} \\
	& \quad + (a\kappa+b\overline{\kappa})\frac{\sqrt{\kappa}}{\omega}\,\mathcal{CH}_{a,b} +\!\sqrt{a(b+1)}\,\frac{\kappa\sqrt{\overline{\kappa}}}{\omega}\,\mathcal{CH}_{a-1,b+1} +\!\sqrt{a}\,\mathcal{CH}_{a-1,b}, \\
	y\,\mathcal{CH}_{a,b} &= \sqrt{b+1}\,\mathcal{CH}_{a,b+1} +\!\sqrt{(a+1)b}\,\frac{\overline{\kappa}\sqrt{\kappa}}{\omega}\,\mathcal{CH}_{a+1,b-1} \\
	& \quad +(a\kappa+b\overline{\kappa})\frac{\sqrt{\overline{\kappa}}}{\omega}\,\mathcal{CH}_{a,b} +\!\sqrt{a(b+1)}\,\frac{\overline{\kappa}\sqrt{\kappa}}{\omega}\,\mathcal{CH}_{a-1,b+1} +\!\sqrt{b}\,\mathcal{CH}_{a,b-1}
\end{align*}
for $a,b=0,1,2,\dots$.
In particular, it follows inductively that the $\mathcal{CH}_{a,b}$ form a linear basis of $\mathbb{C}[x,y]$.
More precisely, the coefficients of every monomial $x^cy^d$ in terms of the $\mathcal{CH}_{a,b}$ are the limits of those of $x^cy^d$ in terms of the $\hat{\mathcal{K}}_{a,b}$.

We are now ready to establish the orthogonality relation for the $\mathcal{CH}_{a,b}$.
Let $\nu$ be the probability measure in Theorem \ref{main theorem}\,(iii), and recall that $\nu_{G^{\square n},\,\myoverline{G}^{\square n}}$ converges weakly to $\nu$.
Pick any monomial $x^cy^d$.
Observe that
\begin{equation*}
	\int_{|x^cy^d|\geqslant L} \bigl|x^cy^d\bigr|\,\nu_{G^{\square n},\,\myoverline{G}^{\square n}}(dxdy) \leqslant \frac{1}{L}\int_{\mathbb{R}^2} (x^cy^d)^2\,\nu_{G^{\square n},\,\myoverline{G}^{\square n}}(dxdy)
\end{equation*}
for every $L>0$, and that the integral in the RHS is uniformly bounded with respect to $n$ by virtue of \eqref{orthogonality for Krawtchouk} and the above comment.
Hence we have
\begin{equation*}
	\lim_{L\rightarrow\infty}\sup_n \int_{|x^cy^d|\geqslant L} \bigl|x^cy^d\bigr|\,\nu_{G^{\square n},\,\myoverline{G}^{\square n}}(dxdy)=0.
\end{equation*}
It is well known (see e.g., \cite[Lemma 8.4.3]{Bogachev2007B}) that this implies that
\begin{equation*}
	\int_{\mathbb{R}^2} x^cy^d\,\nu_{G^{\square n},\,\myoverline{G}^{\square n}}(dxdy)\rightarrow \int_{\mathbb{R}^2} x^cy^d\,\nu(dxdy).
\end{equation*}
Combining this with \eqref{orthogonality for Krawtchouk}, it follows that
\begin{equation}\label{orthogonality for Charlier-Hermite}
	\int_{\mathbb{R}^2} \! \mathcal{CH}_{a,b}(x,y)\hspace{.01in}\mathcal{CH}_{c,d}(x,y)\,\nu(dxdy)=\delta_{a,c}\delta_{b,d}
\end{equation}
for $a,b,c,d=0,1,2,\dots$.

\begin{rem}\label{OPs for other cases}
The discussions in this section work for the other cases in Theorem \ref{main theorem} as well, and we obtain (special cases of) the \emph{bivariate Charlier polynomials} studied by Genest, Miki, Vinet, and Zhedanov \cite{GMVZ2014JPA} for Theorem \ref{main theorem}\,(i) and (ii), and the products of univariate Hermite polynomials for Theorem \ref{main theorem}\,(iv); cf.~\cite[Section 5.1.3]{DX2014B}.
We omit the details.
Genest et al.~\cite[Section 10]{GMVZ2014JPA} also showed among other results that the bivariate Charlier polynomials can be obtained from the bivariate Krawtchouk polynomials by a limit process.
It should be remarked that, unless $\kappa=0$ or $\overline{\kappa}=0$, the bivariate Charlier--Hermite polynomials $\mathcal{CH}_{a,b}$ differ from affine transformations of the products of univariate Poisson and Hermite polynomials, which form another system of orthogonal polynomials with respect to the measure $\nu$ in Theorem \ref{main theorem}\,(iii); cf.~\eqref{linear CH polynomials}.
\end{rem}

\begin{rem}
The above proof of the orthogonality relation \eqref{orthogonality for Charlier-Hermite} for the $\mathcal{CH}_{a,b}$ is based on the weak convergence $\nu_{G^{\square n},\,\myoverline{G}^{\square n}}\rightarrow\nu$, and Example \ref{many TDs} now guarantees that \eqref{orthogonality for Charlier-Hermite} is valid for all the parameters $\kappa,\overline{\kappa}\geqslant 0$ with $\omega=\kappa+\overline{\kappa}>0$.
On the other hand, the bivariate Krawtchouk polynomials $\mathcal{K}_{a,b}$ and their orthogonality relation (cf.~\eqref{orthogonality for Krawtchouk}) have been discussed at a purely algebraic/parametric level \cite{IT2012TAMS,MT2004PAMS}, and it seems that we may also establish \eqref{orthogonality for Charlier-Hermite} in full generality by taking limits at this level and therefore without any reference to specific constructions of strongly regular graphs as in Example \ref{many TDs}.
\end{rem}

\section*{Acknowledgments}

This work, except Section \ref{sec: Charlier-Hermite polynomials}, is part of the Ph.D. thesis of JVSM \cite{Morales2017D}.
NO was supported by JSPS KAKENHI Grant Number JP16H03939.
HT was supported by JSPS KAKENHI Grant Numbers JP25400034 and JP17K05156.

\end{document}